\newcommand{\be}{\begin{equation}}
\newcommand{\ee}{\end{equation}}
\newcommand{\rfb}[1]{\mbox{\rm
   (\ref{#1})}\ifx\undefined\stillediting\else:\fbox{$#1$}\fi}
\newcommand{\half}   {{\frac{1}{2}}}
\newtheorem{Theorem}{Theorem}[section]
\newtheorem{Proposition}[Theorem]{Proposition}
\newtheorem{Lemma}[Theorem]{Lemma}
\newtheorem{Corollary}[Theorem]{Corollary}
\newtheorem{Remark}[Theorem]{Remark}
\numberwithin{equation}{section}
\begin{document}

\pagenumbering{arabic}



\setcounter{secnumdepth}{3}


\renewcommand{\theequation}{\thesection.\arabic{equation}}
\newcommand{\snl}{\ \\}
\newcommand{\pagref}[1]{page~\pageref{#1}}
\newcommand{\appref}[1]{appendix~\ref{#1}}
\newcommand{\chapref}[1]{chapter~\ref{#1}}
\newcommand{\tabref}[1]{table~\ref{#1}}
\newcommand{\figref}[1]{figure~\ref{#1}}
\newcommand{\secref}[1]{section~\ref{#1}}
\newcommand{\subsecref}[1]{subsection~\ref{#1}}
\newcommand{\ltfigure}[2]{\caption{#2} \label{#1}}

\newcommand{\m}[1]{{\(#1\)}}
\newcommand{\tilda}{\m{\sim}}
\newcommand{\mtilda}{\m{\sim}}
\newcommand{\msub}[2]{{\({#1}_{#2}\)}}
\newcommand{\msuper}[2]{{\({#1}^{#2}\)}}
\newcommand{\ucn}[1]{Ultracomputer Note \#{#1}}
\newcommand{\quotetitle}[1]{{\em ``#1''}}
\newcommand{\mla}{{$<=\ $}}
\newcommand{\smla}{{$<=$}}
\newcommand{\cons}{{$ \mid\ $}}

\newcommand{\samel}[1]{\mbox{#1}}
\newcommand{\ems}[1]{\samel{\em #1}}

\newcommand{\mypar}[1]{\paragraph{#1}\ \newline}

\newcommand{\rubsp}{\hspace{0em plus .25em}}
\newcommand{\rubspa}{\hspace{0em plus .1em}}


%

%
%
%
%

\def\squareforqed{\rule{1.5ex}{1.5ex}}

\def\qed{%
        \ifmmode\squareforqed\else{\unskip\nobreak\hfil%
        \penalty50\hskip1em\null\nobreak\hfil\squareforqed%
        \parfillskip=0pt\finalhyphendemerits=0\endgraf}\fi%
}

\let\finpreuve=\qed


\newcommand{\Nu}{\Upsilon}

\bibliographystyle{plain}

\thispagestyle{empty}
\title[Stabilization of the nonlinear damped wave equation]{Stabilization of the nonlinear damped wave equation via linear weak observability}

\author{Ka\"{\i}s Ammari}
\address{UR Analysis and Control of PDE, UR13ES64, Department of Mathematics, Faculty of Sciences of Monastir, University of Monastir, 5019 Monastir, Tunisia} 
\email{kais.ammari@fsm.rnu.tn} 

\author{Ahmed Bchatnia}
\address{UR  Analyse Non-Lin\'eaire et G\'eom\'etrie, UR13ES32, Department of Mathematics, Faculty of Sciences of Tunis, University of Tunis El Manar, Tunisia}
\email{ahmed.bchatnia@fst.rnu.tn} 

\author{Karim El Mufti}
\address{UR Analysis and Control of PDE, UR13ES64, ISCAE, University of Manouba, Tunisia}
\email{karim.elmufti@iscae.rnu.tn}

\begin{abstract}
We consider the problem of energy decay rates for nonlinearly damped abstract infinite dimensional systems. We prove sharp, simple and quasi-optimal energy decay rates through an indirect method, namely a weak observability estimate for the corresponding undamped system. One of the main advantage of these results is that they allow to combine the optimal-weight convexity method of \cite[Alabau-Boussouira]{amo2005} and a methodology of \cite[Ammari-Tucsnak]{ammari} for weak stabilization by observability. Our results extend to nonlinearly damped systems, those of Ammari and Tucsnak \cite{ammari}. At the end, we give an appendix on the weak stabilization of linear evolution systems.
\end{abstract}
\subjclass[2010]{35B35, 35R20, 93D20, 93C25, 93D15}
\keywords{Nonlinear stabilization; Dissipative systems; Weak observability; Energy decay rates; Wave equation; Hyperbolic
equation}

\maketitle

\tableofcontents


\section{Introduction} \label{intro}
We consider the following second order differential equation
\begin{equation}\label{damped1}
\begin{cases}
 \ddot{ w}(t) + Aw (t) + a(.) \rho(., \dot w)=0\,,
\qquad t\in (0,\infty)\,, x \in \Omega \\
w(0)=w^0 \,, \dot w (0)=w^1 \,.
\end{cases}
\end{equation}
where $\Omega$ is a bounded open set in $\mathbb{R}^N$, with a boundary $\Gamma$ and $\rho : \overline{\Omega}\times\mathbb{R} \rightarrow \mathbb{R}$ is supposed to be continuous on $\overline{\Omega}\times\mathbb{R}$ and strictly monotone with respect to the second variable. We assume that $\Omega$ is either convex or of class $\mathcal{C}^{1,1}$. We set $H=L^2(\Omega)$, with its usual scalar product denoted by $\langle\cdot,\cdot\rangle_H$ and the associated norm $\|\cdot\|_H$ and where $A:D(A)\subset H\to H$ is a densely defined self-adjoint linear operator  satisfying
\begin{equation}\label{eq:opA}
\langle Au,u\rangle_H\ge C \|u\|_H^2\qquad\forall u\in D(A)
\end{equation}
for some $C>0$. We also introduce the scale of Hilbert spaces
$H_{\alpha}$, as follows\m: for every $\alpha\geq 0$,
$H_{\alpha}=\mathcal{D}(A^{\alpha})$, with the norm $\|z
\|_{\alpha}=\|A^\alpha z\|_{H}$. The space $H_{-\alpha},$ is defined
by duality with respect to the pivot space $H$ as follows\m:
$H_{-\alpha} =H_{\alpha}^*,$ for $\alpha>0$. The operator $A$ can be
extended (or restricted) to each $H_{\alpha}$, such that it becomes
a bounded operator
\begin{equation}\label{A0ext}
A : H_{\alpha} \rightarrow H_{\alpha-1} \quad \, \forall \  \alpha
\in \mathbb{R} \,.
\end{equation}
\begin{description}
\item[Assumption (A1)]  \
There exists a continuous strictly increasing odd function $g \in \mathcal{C}([-1,1];\mathbb{R})$, continuously differentiable in a neighbourhood of $0$ and satisfying $g(0)=g^{\prime}(0)=0$, with
\begin{equation}\label{eqrhoat0}
\begin{cases}
c_1 g(|v|) \le  |\rho(.,v)| \le c_2 g^{-1}(|v|)
\,, \quad |v| \le 1 \,, \mbox{ a.e. on } \Omega \,,\\
c_1 |v| \le  |\rho(.,v)| \le c_2 |v| \,, \quad |v| \ge 1 \,, \mbox{
a.e. on } \Omega \,,
\end{cases}
\end{equation}
where $g^{-1}$ denotes the inverse function of $g$ and $c_i >0$ for $i=1,2$. Moreover $a \in
\mathcal{C}(\overline{\Omega})$, with $a \ge 0$ on $\Omega$ and there exists $a_0 > 0$ such that $a(x) \geq a_0$ on $\omega$. Here $\omega$ stands for the subregion of $\Omega$ on which the feedback $\rho$ is active.
\end{description}

\medskip
The equation \eqref{damped1} is understood as an equation in
$H_{-1/2}$, i.e., all the terms are in $H_{-1/2}$. The energy of a
solution is defined by

\begin{equation}\label{energydef}
E_w(t) = \frac{1}{2} \Big( \|(w(t),\dot w(t))\|^2_{H_{1/2} \times H}
\Big)
\end{equation}
\noindent Most of the nonlinear equations modelling the damped
vibrations of elastic structures can be written in the form
\eqref{damped1}, where $w$ stands for the displacement field and the
term $B\dot w(t)=a(.)\rho(.,\dot w)$, represents a viscous feedback
damping. 

Let us introduce the operator 
$$
{\mathcal A} = \left(
\begin{array}{cc}
  0 & I \\
- A & - a \rho
\end{array}
\right) : D({\mathcal A}) = H_1 \times H_{1/2} \subset H_{1/2} \times H \rightarrow H_{1/2} \times H
$$
and \eqref{damped1} becomes 
$$
\dot{W} = {\mathcal A} W, \, W(0) = W^0,
$$
where
$W^0 = \left( \begin{array}{ll} w^0 \\ w^1 \end{array} \right)$ and $W = \left( \begin{array}{ll} w \\ \dot{w} \end{array} \right).$

The operator ${\mathcal A}$ is the generator of a continuous semigroup of nonlinear contractions in $H_{1/2} \times H$ (see \cite[Corollary 2.1, page 35]{barbu}). Then the system \eqref{damped1} is well-posed. More precisely, the following holds:

If $(w^0,w^1)\in H_{1} \times H_{1/2}$. Then the problem
\eqref{damped1} admits a unique strong solution
$$w \in C([0,\infty);H_{1}) \cap C^{1}([0,\infty);H_{1/2}).$$ 
Moreover, if $(w^0,w^1) \in H_{1/2} \times H$ then the system \eqref{damped1} 
admits a unique mild solution, i.e., $(w,\dot{w}) \in C([0,+\infty),H_{1/2} \times H)$. 

We have for all $t\geq 0$, the following energy identity:

\begin{equation}\label{esten}
\|(w^0,w^1)\|^2_{H_{1/2} \times H} - \|(w(t),\dot w(t))\|^2_{H_{1/2}
\times H}= 2 \, \int_0^t \int_{\Omega} a(.) \rho(.,\dot{w}(s)),\dot
w(s)\,dx \,ds.
\end{equation}
The aim of this paper is to deduce energy decay rates from weak
observability estimates for the associated undamped system, that is

\begin{equation}\label{conservative}
\begin{cases}
\ddot \phi(t) + A \phi(t) = 0,  \\
\phi(0) = \phi^0, \, \dot \phi(0) = \phi^1.
\end{cases}
\end{equation}
Our results extend to nonlinearly damped systems, those of Ammari
and Tucsnak~\cite{ammari} (see also \cite{ammariniciase} for more details) which concern linearly damped systems.
\section{Preliminaries and main results} \label{trans}
\medskip

Before stating our main results, let us precise some hypotheses on
the feedback and give some preliminary definitions.

\medskip
We define a function $R$ (see~\cite{amo2005})  by
\begin{equation}\label{H}
R(x) =\sqrt{x}g(\sqrt{x}) \,, \ x \in [0,r_0^2] \,,
\end{equation}
Thanks to assumption ${\bf{(A1)}}$, $R$ is of class $\mathcal{C}^1$ and is
strictly convex on $[0,r_0^2]$, where $r_0>0$ is a sufficiently
small number. We still denote by $R$ its extension to $\mathbb{R}$
with $R(x)=+\infty$ for $x \in \mathbb{R} \backslash [0,r_0^2]$. We
also define a function $L$ by
\begin{equation}\label{L}
L(y)=\left\{
\begin{array}{l}
\displaystyle{\frac{R^{\star}(y)}{y} \quad \mbox{ , if  } y \in (0,
+\infty) \,,}\\ [0.2cm] 0 \quad \mbox{ , if       } y =0 \,,
\end{array}\right.
\end{equation}
\noindent where $R^{\star}$ stands for the convex conjugate function
of $R$, i.e.: $ R^{\star}(y)=\sup_{x \in \mathbb{R}}\{xy - R(x)\}.$
Moreover we define a weight function $f$ such that
\begin{equation}\label{weight}
R^{\star}(f(s))= \frac{s f(s)}{\beta} \,, \quad \ s \in [0, \beta
r_0^2) \,,
\end{equation}
\noindent where $\beta$ is a constant that will be chosen later. We
recall that f is defined by
$$
f(s)=L^{-1}\Big(\frac{s}{\beta}\Big) \,, \quad \forall \ s \in [0,
\beta r_0^2) \,.
$$
\noindent One can show \cite{amo2005} that $f$ is a strictly
increasing function from $[0, \beta r_0^2)$ onto $[0, \infty)$.\\

After, we consider the unbounded operator
\begin{equation} \label{op}
{\mathcal A}_d : {\mathcal D}({\mathcal A}_d) \subset H_{1/2} \times H \rightarrow H_{1/2} \times H, \,
{\mathcal A}_d = \left(
\begin{array}{cc}
0 &I \\
- A &- a\\
\end{array}
\right),
\end{equation}
where
$$
{\mathcal D}({\mathcal A}_d) = H_1 \times H_{1/2}.
$$

Let $X_1, X_2$ be two Banach spaces such that
$$
{\mathcal D}({\mathcal A}_{d}) \subset H_{1/2} \times
H \subset X_1 \times X_2, \,
$$
with continuous embeddings
and
\begin{equation} 
\label{h3}
[H_1 \times H_{1/2}, X_1 \times X_2 ]_{\theta} = H_{1/2} \times H,
\end{equation}
for a fixed real number $0<\theta <1$, where $[. ,. ]_{\theta}$
denotes the interpolation space (see for instance Triebel
\cite{triebel}, \cite{ammari}) and ${\mathcal G}: \mathbb{R}_+
\rightarrow \mathbb{R}_+$ be an increasing and continuous function on ${\mathbb R}_+ = (0,\infty)$. 
\begin{description}
\item[Assumption (A2)]  \
There exist $T, C_T >0$ such that the following observability inequality
is satisfied for the linear conservative system \eqref{conservative} 

\begin{equation}\label{ineq-obs}
c_T E_{\phi}(0){\mathcal G}
\left(\frac{||({\phi}^0,{\phi}^1)||^2_{X_1\times X_2}}{E_{\phi}(0)}
\right) \le \int_0^T |\sqrt{a} \dot \phi|^2_H  \,dt
\end{equation}
for any non-identically zero initial data $(\phi^0,\phi^1) \in H_{1/2} \times H$.
\end{description}

Our main results are stated as follows:

\begin{Theorem}\label{main}
Let $\eta>0$ and $T_0>0$ be fixed given real numbers. For any $r\in
(0, \eta)$, we define a function $K_r$ from $(0,r)$ on $[0,\infty)$
by
\begin{equation}\label{Kr}
K_r(\tau)=\int_{\tau}^r \frac{1}{v (f {\mathcal
G}_{\theta})^{-1}(v)}\,dv,
\end{equation}
here ${\mathcal G}_{\theta}= {\mathcal G}\circ x^{\frac{1}{\theta} -1}$. \noindent We
also define
\begin{equation}\label{psir}
\psi_r(z)=z+ K_r\big(f {\mathcal G}_{\theta}\big(\frac{1}{z}\big)\big),
\quad z \ge \frac{1}{(f {\mathcal G}_{\theta})^{-1}(r)}\,.
\end{equation}
Assume $({\bf A1})$ and $({\bf A2})$. Then for non-identically zero initial data $(w^0,w^1) \in H_{1} \times H_{1/2}$, the energy of the strong solution of
\eqref{damped1} satisfies
\begin{equation}\label{general-decay}
E_w(t) \le \beta T (f {\mathcal G}_{\theta})^{-1}
\Big(\frac{1}{\psi_r^{-1}(\frac{t-T}{T_0})}\Big) \ , \quad \mbox{
for } t \mbox{ sufficiently large} \,.
\end{equation}
\end{Theorem}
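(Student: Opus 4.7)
The strategy is iterative: on each interval $[nT,(n+1)T]$, compare the damped trajectory $w$ to the solution $\phi_n$ of the conservative system~\eqref{conservative} with Cauchy data $(\phi_n,\dot\phi_n)(nT) = (w,\dot w)(nT)$, and apply the observability inequality~\eqref{ineq-obs} to $\phi_n$. The difference $z_n := w - \phi_n$ satisfies $\ddot z_n + A z_n = -a\rho(\cdot,\dot w)$ with zero Cauchy data at $nT$, so a standard energy estimate for the inhomogeneous equation gives
\[
\int_{nT}^{(n+1)T}\|\sqrt{a}\,\dot z_n\|_H^2\,dt \le C\,T\int_{nT}^{(n+1)T}\!\!\int_\Omega a\,|\rho(\cdot,\dot w)|^2\,dx\,dt.
\]
Since $E_{\phi_n}(s) \equiv E_w(nT)$ on the interval and $\dot\phi_n = \dot w - \dot z_n$, the observability of $\phi_n$ rewrites as
\[
c_T\,E_w(nT)\,\mathcal{G}\!\left(\frac{\|(w,\dot w)(nT)\|^2_{X_1\times X_2}}{E_w(nT)}\right) \le 2\!\int_{nT}^{(n+1)T}\!\!\|\sqrt{a}\,\dot w\|_H^2\,dt + 2\!\int_{nT}^{(n+1)T}\!\!\|\sqrt{a}\,\dot z_n\|_H^2\,dt.
\]

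The left-hand side is bounded from below using the interpolation identity~\eqref{h3}: since for strong solutions the $H_1\times H_{1/2}$-norm is uniformly bounded in $t$, one gets $\|(w,\dot w)(nT)\|^2_{X_1\times X_2} \gtrsim E_w(nT)^{1/\theta}$, and the monotonicity of $\mathcal{G}$ yields a lower bound of the form $c\,E_w(nT)\,\mathcal{G}_\theta(E_w(nT))$. On the right-hand side, the $\dot w$ integral is handled by the optimal-weight convexity method of Alabau-Boussouira~\cite{amo2005}: splitting $\Omega$ into $\{|\dot w|\le 1\}$ and $\{|\dot w|>1\}$, one uses in the super-unit regime that $|\rho(\cdot,\dot w)|\asymp|\dot w|$ by~\eqref{eqrhoat0}, so $\int a\,|\dot w|^2\lesssim\int a\,\rho\,\dot w$, which is controlled by the dissipation $E_w(nT)-E_w((n+1)T)$ via~\eqref{esten}. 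In the sub-unit regime one applies Young's inequality for the Legendre pair $(R,R^\star)$ with weight $f$ from~\eqref{weight}, obtaining
\[
\int_{nT}^{(n+1)T}\!\!\int_\Omega a\,|\dot w|^2\,\chi_{\{|\dot w|\le 1\}}\,dx\,dt \le \frac{C}{f(E_w(nT))}\big(E_w(nT)-E_w((n+1)T)\big) + \frac{T}{\beta}\,E_w(nT)\,f(E_w(nT)).
\]
The $\dot z_n$ contribution is absorbed by the same dichotomy, using the bound on $\int a|\rho|^2$ already derived together with the sub-unit/super-unit estimates for $\rho$.

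Collecting these estimates yields the fundamental recursive inequality
\[
c\,E_w(nT)\,(f\mathcal{G}_\theta)\big(E_w(nT)\big) \le E_w(nT) - E_w((n+1)T),
\]
which is precisely the nonlinear Lyapunov dissipation relation treated by the ODE argument of~\cite{amo2005}. Setting $z_n := 1/(f\mathcal{G}_\theta)(E_w(nT))$ and integrating the corresponding differential inequality yields $\psi_r(z_n) - \psi_r(z_0) \gtrsim n/T_0$ with $\psi_r$ as in~\eqref{psir}, hence $E_w(nT)\le \beta T\,(f\mathcal{G}_\theta)^{-1}(1/\psi_r^{-1}(n/T_0))$, and the monotonicity of $E_w$ in $t$ gives~\eqref{general-decay}. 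The main obstacle is calibrating the weight $f$ and the interpolation exponent $\theta$ simultaneously so that all error terms — the residual Young term and the $\dot z_n$ correction — can be absorbed into the dissipation: the defining relation~\eqref{weight} is what makes the Young residual exactly linear in $E_w(nT)$ with prefactor $f(E_w(nT))/\beta$, after which $\beta$ can be chosen small enough to close the estimate while preserving the correct scaling in $\mathcal{G}_\theta$ coming from the weak observability space $X_1\times X_2$.
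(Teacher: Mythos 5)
Your proposal is correct and follows essentially the same route as the paper: weak observability of the conservative system transferred to the nonlinearly damped one by a perturbation/energy estimate, the interpolation identity \eqref{h3} to convert the $X_1\times X_2$ quotient into the power $\widehat E_w(nT)^{\frac{1}{\theta}-1}$ of the normalized energy, the optimal-weight convexity method of \cite{amo2005} (the content of Lemma \ref{energy-kinetic}) to absorb the $|\dot w|^2$ and $|\rho|^2$ terms into the dissipation and obtain the recursion $\widehat E_w((n+1)T)\le \widehat E_w(nT)\bigl(1-c\,f(\widehat E_w(nT))\,\mathcal{G}_\theta(\widehat E_w(nT))\bigr)$, and finally the discrete-to-continuous comparison of Theorem \ref{discretcont} to produce \eqref{general-decay}. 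The only structural difference is that you compare $\phi$ and $w$ directly via the difference $z_n$, whereas the paper routes the comparison through the auxiliary linearly damped solution of \eqref{lineardamped} in two steps (Lemmas \ref{comparison} and \ref{phiz}); this is immaterial to the argument.
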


\begin{Remark}
Suppose further that the function 
\begin{eqnarray*}
h : (0,1) &&\rightarrow \mathbb{R}_+ \\
 x &&\mapsto \frac{1}{x^{\frac{\theta}{1 - \theta}}} \mathcal{G}
\end{eqnarray*}
is increasing on $(0,1)$.

Notice that  
$$
h(\alpha x) \leq h(x), \forall \, \alpha \in (0,1), x \in (0,1),
$$
or equivalently  
$$
\mathcal{G} (\alpha x) \leq \alpha^{\frac{\theta}{1-\theta}} \mathcal{G}(x), \forall \, \alpha \in (0,1), x \in (0,1).
$$
Letting $\alpha$ goes to zero this implies that $\mathcal{G}(0) = 0$ and then $\mathcal{G}(x) > 0$ for all $x > 0$.
In this case the inequality \rfb{ineq-obs} implies, according to \cite[Theorem 2.4]{ammari}, that we have a weak stability for the linear associated problem, i.e., there exists a constant $C > 0$ such that for all $t > 0$ and for all $(w^0,w^1) \in H_1 \times H_\half$ we have that the solution of \rfb{damped1} with $\rho = Id$ satisifes:
$$
E_w(t) \leq C \, \left[{\mathcal G}^{-1} \left(\frac{1}{1+t}\right)\right]^{\frac{\theta}{1-\theta}} \, \left\|(w^0,w^1)\right\|^2_{H_1 \times H_\half}.
$$
\end{Remark}

\medskip

Let ${\mathcal H}: \mathbb{R}_+ \rightarrow \mathbb{R}_+$ such that 
${\mathcal H}$ is continuous, invertible and increasing on $\mathbb{R}_+$. 

\begin{description}
\item[Assumption (A3)]  \
There exist $T, C_T >0$ such that the following observability inequality
is satisfied for the linear conservative system \eqref{conservative} 

\begin{equation}\label{ineq-obsbis}
C_T ||({\phi}^0,{\phi}^1)||^2_{H_1 \times H_\half} \, {\mathcal H}
\left(\frac{E_{\phi}(0)}{||({\phi}^0,{\phi}^1)||^2_{H_1 \times H_\half}}
\right) \le \int_0^T |\sqrt{a} \dot \phi|^2_H  \,dt
\end{equation}
for any non-identically zero initial data $(\phi^0,\phi^1) \in H_{1} \times H_\half$.
\end{description}

By the same way as in Theorem \ref{Kr} we have the following result.

\begin{Theorem} \label{mainbis}
Let $\eta>0$ and $T_0>0$ be fixed given real numbers. For any $r\in
(0, \eta)$, we define a function ${\mathcal K}_r$ from $(0,r)$ on $[0,\infty)$
by
\begin{equation}\label{Krbisbis}
{\mathcal K}_r(\tau)=\int_{\tau}^r \frac{1}{v (f {\mathcal
H})^{-1}(v)}\,dv.
\end{equation}
We also define
\begin{equation}
\label{psirbisn}
\Psi_r(z)=z+ {\mathcal K}_r\big(f {\mathcal H} \big(\frac{1}{z}\big)\big),
\quad z \ge \frac{1}{(f {\mathcal H})^{-1}(r)}\,.
\end{equation}
Assume $({\bf A1})$ and $({\bf A3})$. Then for non-identically zero initial data $(w^0,w^1) \in H_{1} \times H_{1/2}$, the energy of the strong solution of
\eqref{damped1} satisfies
\begin{equation}\label{general-decaybisbis}
E_w(t) \le \beta T (f {\mathcal H})^{-1}
\Big(\frac{1}{\Psi_r^{-1}(\frac{t-T}{T_0})}\Big) \ , \quad \mbox{
for } t \mbox{ sufficiently large} \,.
\end{equation}
\end{Theorem}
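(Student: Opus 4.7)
The plan is to follow the same strategy as in the proof of Theorem~\ref{main}, adapted to the setting of~$({\bf A3})$ where the observability estimate is posed directly at the level of the strong norm $H_1\times H_\half$, so that no interpolation step is needed. Concretely, I would split the strong solution $w$ of~\eqref{damped1} as $w=\phi+\psi$, where $\phi$ solves the conservative problem~\eqref{conservative} with the same initial data $(w^0,w^1)\in H_1\times H_\half$, and the remainder $\psi:=w-\phi$ satisfies
\[
\ddot\psi+A\psi=-a(\cdot)\,\rho(\cdot,\dot w),\qquad \psi(0)=\dot\psi(0)=0.
\]
A standard energy estimate for $\psi$ controls $\int_0^T|\sqrt{a}\,\dot\psi|_H^2\,dt$ by the dissipation $\int_0^T\!\!\int_\Omega a\,\rho(\cdot,\dot w)\,\dot w\,dx\,dt=E_w(0)-E_w(T)$, up to a multiplicative constant depending on $T$ and the continuous embedding $H_\half\hookrightarrow H$.

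Next, applying~$({\bf A3})$ to $\phi$ and using $(\phi^0,\phi^1)=(w^0,w^1)$, $E_\phi(0)=E_w(0)$ yields
\[
C_T\,\|(w^0,w^1)\|^2_{H_1\times H_\half}\,{\mathcal H}\!\left(\frac{E_w(0)}{\|(w^0,w^1)\|^2_{H_1\times H_\half}}\right)\le\int_0^T|\sqrt{a}\,\dot\phi|_H^2\,dt.
\]
Writing $\dot\phi=\dot w-\dot\psi$ and invoking the previous bound on $\psi$, the problem reduces to bounding $\int_0^T|\sqrt{a}\,\dot w|_H^2\,dt$ by the dissipation. Here the optimal-weight convexity method of Alabau-Boussouira~\cite{amo2005} enters: with the weight $f$ from~\eqref{weight} and the Young-type inequality $s\,f(s)\le R(|\dot w|)+R^\star(f(s))$ evaluated at $s=E_w(T)/\beta$ on $\{|\dot w|\le 1\}$, combined with the linear bound $|\dot w|^2\le c\,\rho(\cdot,\dot w)\,\dot w$ on $\{|\dot w|\ge 1\}$, one obtains
\[
\int_0^T\!\!\int_\Omega a\,|\dot w|^2\,dx\,dt\le \frac{C}{f(E_w(T))}\bigl(E_w(0)-E_w(T)\bigr)+C\,T\,E_w(T).
\]

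Combining these three estimates, iterating on the intervals $[nT,(n+1)T]$, and using the non-increase of $t\mapsto E_w(t)$, leads to a discrete inequality of the form
\[
E_{n+1}\le \beta T\,(f{\mathcal H})^{-1}\bigl(E_n-E_{n+1}\bigr),\qquad E_n:=E_w(nT).
\]
This is exactly the shape solved by the ODE-comparison argument of Theorem~\ref{main}: one integrates against the kernel $1/\bigl(v\,(f{\mathcal H})^{-1}(v)\bigr)$ that defines ${\mathcal K}_r$ in~\eqref{Krbisbis}, and then inverts $\Psi_r$ from~\eqref{psirbisn} to obtain the claimed decay~\eqref{general-decaybisbis}. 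The hardest step will be the third one above, where the optimal-weight rescaling must be matched with the ${\mathcal H}$-observability so that the resulting recurrence is precisely of the form whose solution is expressed through $(f{\mathcal H})^{-1}\circ 1/\Psi_r^{-1}$; this is cleaner than the corresponding step in Theorem~\ref{main} since the interpolation exponent $\theta$ no longer appears, but one must still verify that the monotonicity of $f$ and ${\mathcal H}$ makes $(f{\mathcal H})^{-1}$ well-defined on $(0,r)$ and that the error term $C\,T\,E_w(T)$ can be absorbed into the left-hand side for $r$ small enough.
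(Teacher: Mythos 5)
Your overall architecture (weak observability for $\phi$, comparison of $\phi$ with $w$, optimal-weight convexity, then a discrete-to-continuous comparison) matches the paper's, whose proof of Theorem~\ref{mainbis} is literally declared to be ``a simple adaptation'' of that of Theorem~\ref{main}; your only structural variation is to compare $\phi$ with $w$ directly through $\psi=w-\phi$ rather than through the intermediate linearly damped solution $z$ of \eqref{lineardamped} (Lemmas~\ref{comparison} and~\ref{phiz}), and that variation is harmless. The first genuine gap is in your estimate for $\psi$: the energy identity for $\psi$ (zero data, source $-a\rho(\cdot,\dot w)$) controls $\int_0^T|\sqrt a\,\dot\psi|_H^2\,dt$ by $\int_0^T\!\int_\Omega a\,|\rho(x,\dot w)|^2\,dx\,dt$, \emph{not} by the dissipation $\int_0^T\!\int_\Omega a\,\rho(x,\dot w)\,\dot w\,dx\,dt$. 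The pointwise ratio $|\rho|^2/(\rho\,\dot w)=|\rho|/|\dot w|$ can, by \eqref{eqrhoat0}, be as large as $c_2\,g^{-1}(|\dot w|)/|\dot w|$ on $\{|\dot w|\le 1\}$, which blows up as $\dot w\to 0$ whenever $g$ is superlinear (e.g.\ $g(x)=x^p$, $p>1$), so no constant makes your claimed bound true. This is precisely why Lemma~\ref{energy-kinetic} keeps the combination $a|\dot w|^2+a|\rho(x,\dot w)|^2$ on the left-hand side and controls it only after multiplication by the weight $f$, via $R^{\star}(f(s))=sf(s)/\beta$ plus the dissipation. You must carry the $|\rho|^2$ term into the convexity step instead of discarding it; as written, your displayed convexity estimate treats only $\int a|\dot w|^2$ and the argument does not close.

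The second gap concerns the endgame. The recurrence you arrive at, $E_{n+1}\le\beta T\,(f\mathcal H)^{-1}(E_n-E_{n+1})$, is not of the form required by the Euler-scheme/ODE comparison machinery (Lemmas~\ref{comparison2}--\ref{comparison3} and Theorem~\ref{discretcont}), which needs $\widehat E_{k+1}\le\widehat E_k-\rho_T M(\widehat E_k)$ with $M$ nondecreasing and evaluated at the \emph{earlier} time; your version puts the nonlinearity at $E_{n+1}$, inside an inverse function, and omits the normalization. The correct derivation multiplies \eqref{ineq-obsbis} by the constant weight $f(\widehat E_w(0))$ with $\widehat E_w(0)=E_\phi(0)/\|(w^0,w^1)\|^2_{H_1\times H_{1/2}}$, bounds above via Lemma~\ref{energy-kinetic}, absorbs the term $R^{\star}(f)=\widehat E_w(0)f(\widehat E_w(0))/\beta$ by choosing $\beta$ large (not $r$ small --- note the factor $\beta T$ in \eqref{general-decaybisbis}), and divides by the strong norm to obtain an inequality for the normalized energy with $(f\mathcal H)$ evaluated at $\widehat E_w(0)$. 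To iterate on the intervals $[kT,(k+1)T]$ one must also control $\|(w(kT),\dot w(kT))\|_{H_1\times H_{1/2}}$ by its value at $t=0$ (contractivity in $D(\mathcal A_d)$), a point your unnormalized sketch never addresses although assumption $({\bf A3})$ is stated entirely in terms of that ratio.
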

\begin{Remark}
\begin{enumerate}
\item
If we suppose in addition that the function $x \mapsto \frac{1}{x} \, {\mathcal H}(x)$ is increasing on $(0,1).$ Then,
the estimate \rfb{general-decaybisbis} is a generalization (to the nonlinear case) of \rfb{nunif2} in Theorem \ref{princ3}. 
\item
The case ${\mathcal H} = Id$ corresponds to the situation treated in \cite[Theorem 1.1]{am-al} (which we can compare to the linear case, i.e., Theorem \ref{princ3}.)
\end{enumerate}
\end{Remark}

\section{Intermediate results}
\medskip

We start by a key Lemma which relies on the optimal-weight convexity
method of~\cite{amo2005} (see also \cite{am-al,jde2010-2,jde2010-1}), so the proof
will be omitted.

\medskip

\begin{Lemma}\label{energy-kinetic}
Assume that $\rho$ and $a$ satisfy the assumption $(A1)$ and that
there exists $r_0>0$ sufficiently small so that the function $R$
defined by \eqref{H} is strictly convex on $[0,r_0^2]$. Let
$(w^0,w^1)\in H_{1} \times H_{1/2}$, non-identically zero, be given and $(\phi^0,\phi^1)=
(w^0,w^1)$ and $w$ and $\phi$ be the respective solutions of
\eqref{damped1} and of \eqref{conservative}. Then the following
inequality holds
\begin{eqnarray}\label{kinetic}
&&\int_0^T f \left(\frac{E_\phi(0)}{||({\phi}^0,{\phi}^1)||^2_{H_1 \times H_{1/2}}} \right)\int_{\Omega}\Big(a(x)|\dot w|^2 + a(x)|\rho(x,\dot w)|^2\Big)\,dx\,dt \nonumber \\
&\leq& c_5 T R^{\star}\left(f \left(\frac{E_\phi(0)}{||({\phi}^0,{\phi}^1)||^2_{H_1 \times H_{1/2}}} \right) \right) \\
&+& c_6\left(f \left(\frac{E_\phi(0)}{||({\phi}^0,{\phi}^1)||^2_{H_1 \times H_{1/2}}}\right) +1 \right) \int_0^T \int_{\Omega}a(x)\rho(x,\dot w)\dot w \,dx\,dt\,,\nonumber
\end{eqnarray}
where
$$
c_5=|\Omega |(1+ c_2^2) \,,\, 
c_6=\Big(\frac{1}{c_1}+ c_2\Big)\,,
$$
\noindent and $|\Omega |=\int_{\Omega}\,d\sigma$, with
$d\sigma=a(.)dx$.
\end{Lemma}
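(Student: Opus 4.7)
The plan is to prove \eqref{kinetic} via pointwise estimates on the integrand, combined with the convex-dual inequality $xy \le R(x) + R^\star(y)$ in the low-velocity regime. Observe first that the factor $f(E_\phi(0)/\|(\phi^0,\phi^1)\|^2_{H_1\times H_{1/2}})$ is a constant in $(x,t)$; call it $f_0$. Likewise $R^\star(f_0)$ is a constant. So it suffices to derive pointwise bounds on $(|\dot w|^2+|\rho(x,\dot w)|^2)\,f_0$ and then integrate against $a(x)\,dx\,dt$ over $(0,T)\times\Omega$.

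I split $\overline\Omega$ at each time $t$ into $\Omega_1(t)=\{|\dot w|\le 1\}$ and $\Omega_2(t)=\{|\dot w|>1\}$, the threshold matching the breakpoint of $({\bf A1})$. On $\Omega_2$, the linear sandwich $c_1|\dot w|\le|\rho|\le c_2|\dot w|$ directly yields $|\dot w|^2\le \tfrac{1}{c_1}\rho\dot w$ and $|\rho|^2\le c_2\,\rho\dot w$, so $(|\dot w|^2+|\rho|^2)f_0\le c_6 f_0\,\rho\dot w$ with $c_6=\tfrac{1}{c_1}+c_2$. This contributes the $c_6 f_0\int_0^T\!\!\int a\rho\dot w$ piece of the right-hand side.

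The core of the argument is on $\Omega_1$, where the Alabau-Boussouira optimal-weight convexity trick is applied twice. Young's inequality with $u=|\dot w|^2$ and $v=f_0$ gives $|\dot w|^2 f_0\le R(|\dot w|^2)+R^\star(f_0)=|\dot w|g(|\dot w|)+R^\star(f_0)$, and the lower bound $c_1 g(|\dot w|)\le|\rho|$ of $({\bf A1})$ converts $|\dot w|g(|\dot w|)$ into $\tfrac{1}{c_1}\rho\dot w$. For the $|\rho|^2$ contribution, Young's with $u=|\rho|^2/c_2^2$ and $v=f_0$ yields $\tfrac{|\rho|^2}{c_2^2}f_0\le \tfrac{|\rho|}{c_2}g(|\rho|/c_2)+R^\star(f_0)$; the sublinear upper bound $|\rho|\le c_2 g^{-1}(|\dot w|)$ is equivalent, by monotonicity of $g$, to $g(|\rho|/c_2)\le|\dot w|$, so the first term is bounded by $\tfrac{1}{c_2}\rho\dot w$. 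Rescaling by $c_2^2$ and summing, one obtains on $\Omega_1$ the pointwise estimate $(|\dot w|^2+|\rho|^2)f_0\le c_6\,\rho\dot w+(1+c_2^2)R^\star(f_0)$.

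It remains to integrate against $a(x)\,dx\,dt$ and combine the two regions: the $R^\star(f_0)$ contribution from $\Omega_1$ gives $(1+c_2^2)TR^\star(f_0)\int_\Omega a\,dx=c_5 TR^\star(f_0)$ with the convention $|\Omega|=\int_\Omega a$, while the $\rho\dot w$ pieces from $\Omega_1$ and $\Omega_2$ combine into the coefficient $c_6(1+f_0)$. The main obstacle is the $|\rho|^2$ bound on $\Omega_1$: the correct rescaling by $c_2^2$ inside Young's inequality, together with the identity $R(y^2)=y g(y)$ and the monotonic inversion of the upper bound on $\rho$, is exactly what makes the constants match and produces the precise values of $c_5$ and $c_6$ asserted by the lemma.
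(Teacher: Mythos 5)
The paper does not actually prove Lemma~\ref{energy-kinetic}: it states that the proof ``relies on the optimal-weight convexity method of \cite{amo2005}'' and omits it, referring to \cite{amo2005,am-al}. Your reconstruction is exactly that method --- the Fenchel--Young inequality $xy\le R(x)+R^{\star}(y)$ applied twice with the weight $f_0$ on the low-velocity set, the linear sandwich on the high-velocity set, and the recombination $c_6\int_{\Omega_1}+c_6f_0\int_{\Omega_2}\le c_6(1+f_0)\int_\Omega$ using $\rho(x,\dot w)\dot w\ge 0$. The bookkeeping of $c_5=|\Omega|(1+c_2^2)$ and $c_6=\tfrac{1}{c_1}+c_2$ is correct as far as it goes, including the rescaling by $c_2^2$ in the second Young step and the inversion $g(|\rho|/c_2)\le|\dot w|$.

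There is, however, one genuine gap: your decomposition threshold $|\dot w|=1$ is not compatible with the domain on which $R$ is finite. The lemma only assumes $R(x)=\sqrt{x}\,g(\sqrt{x})$ is strictly convex on $[0,r_0^2]$ for $r_0$ \emph{sufficiently small}, and $R\equiv+\infty$ outside $[0,r_0^2]$; hence $|\dot w|^2f_0\le R(|\dot w|^2)+R^{\star}(f_0)$ is vacuous on $\{r_0<|\dot w|\le 1\}$, and the second application additionally requires $|\rho|/c_2\le r_0$, i.e.\ $|\dot w|\le g(r_0)$. On the intermediate range $g(r_0)<|\dot w|\le 1$ neither the convexity argument nor the linear bounds of $({\bf A1})$ apply: for instance $|\rho|^2\le c_2\,\rho\dot w$ can fail there, since $({\bf A1})$ only gives $|\rho|\le c_2g^{-1}(|\dot w|)$, which may exceed $c_2|\dot w|$. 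The standard repair (as in \cite{amo2005}) is to split at $\varepsilon_0=\min(r_0,g(r_0))$ and to estimate the set $\{\varepsilon_0<|\dot w|\le 1\}$ separately using $|\rho|\ge c_1g(\varepsilon_0)>0$, $|\rho|\le c_2g^{-1}(1)$ and $|\dot w|\ge\varepsilon_0$, which yields $|\dot w|^2+|\rho|^2\le c\,\rho\dot w$ there for a constant $c$ depending on $\varepsilon_0$ and $g$; this is harmless for the decay analysis but in general enlarges $c_6$ beyond $\tfrac{1}{c_1}+c_2$ (so the constants as stated are only obtained when $r_0$ can be taken equal to $1$, as in the paper's examples). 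You should either add this intermediate-region estimate or state explicitly the assumption under which your split at $|\dot w|=1$ is legitimate.
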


\medskip

The next Lemma compares the localized kinetic damping of the
linearly damped equation with the localized linear and nonlinear
kinetic energies of the nonlinearly damped equation.

\medskip

\begin{Lemma}\label{comparison}
Assume that $\rho \in \mathcal{C}(\overline{\Omega} \times
\mathbb{R};\mathbb{R})$ is a continuous monotone nondecreasing
function with respect to the second variable on $\Omega$ such that
$\rho(.,0)=0$ on $\Omega$. Let $w$ be the solution of
\eqref{damped1} with non-identically zero initial data $(w^0,w^1) \in H_1 \times H_{1/2}$.
Let us introduce $z$ solution of the linear locally damped problem
\begin{equation}
\begin{cases}
\ddot z + A z + a(x) \dot z =0 \,, \label{lineardamped}\\
z(0)=w^0, \dot z(0)=w^1 \,. 
\end{cases}
\end{equation}
\noindent Then the following inequality holds
\begin{equation}\label{linearobs}
\int_0^T \int_{\Omega}a(x)|\dot z|^2 \,dx\,dt \leq 2
\int_0^T\int_{\Omega}\Big(a(x)|\dot w|^2 + a(x)|\rho(x,\dot
w|^2\Big)\,dx\,dt.
\end{equation}
\end{Lemma}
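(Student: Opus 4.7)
The plan is to study the difference $y:=z-w$. Subtracting the two equations and using $\dot z=\dot w+\dot y$ shows that $y$ solves
\begin{equation*}
\ddot y + Ay + a(x)\,\dot y \;=\; a(x)\bigl(\rho(x,\dot w)-\dot w\bigr), \qquad y(0)=\dot y(0)=0.
\end{equation*}
Multiplying by $\dot y$, integrating over $\Omega\times(0,T)$, and dropping the nonnegative energy of $y$ at time $T$ yields the basic estimate
\begin{equation*}
\int_0^T\!\int_\Omega a(x)\,|\dot y|^2\,dx\,dt \;\le\; \int_0^T\!\int_\Omega a(x)\bigl(\rho(x,\dot w)-\dot w\bigr)\dot y\,dx\,dt.
\end{equation*}

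The heart of the argument is then to substitute $\dot y=\dot z-\dot w$ on both sides and expand. The $\int a\,|\dot w|^2$ contributions on the two sides cancel, and after collecting the remaining terms the inequality reduces to
\begin{equation*}
\int_0^T\!\int_\Omega a(x)\,|\dot z|^2\,dx\,dt \;\le\; \int_0^T\!\int_\Omega a(x)\,\dot z\bigl(\dot w+\rho(x,\dot w)\bigr)\,dx\,dt \;-\; \int_0^T\!\int_\Omega a(x)\,\rho(x,\dot w)\,\dot w\,dx\,dt.
\end{equation*}
The last integral is $-\int a\,\rho(x,\dot w)\dot w$; since $\rho$ is monotone nondecreasing in the second variable with $\rho(\cdot,0)=0$, the product $s\,\rho(x,s)$ is always nonnegative, so this term is nonpositive and may be discarded from the right-hand side.

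It then remains to apply Cauchy--Schwarz in $L^2(\Omega\times(0,T);\,a\,dx\,dt)$ to the remaining right-hand side. This yields $\int_0^T\!\int_\Omega a\,|\dot z|^2 \le \int_0^T\!\int_\Omega a\bigl(\dot w+\rho(x,\dot w)\bigr)^2$, and the elementary bound $(\alpha+\beta)^2\le 2\alpha^2+2\beta^2$ produces exactly the constant $2$ advertised in \rfb{linearobs}. The only delicate point, and the reason the constant is sharp, is the algebraic bookkeeping in the substitution step: the mixed $\dot z\dot w$ and $|\dot w|^2$ terms must combine in such a way that the only sign-indefinite residue is precisely the nonlinear dissipation $\int a\,\rho(x,\dot w)\dot w$, whose sign is controlled by the monotonicity of $\rho$; any less careful estimate of the mixed terms would force a strictly larger constant than $2$.
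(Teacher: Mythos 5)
Your proof is correct, and it is essentially the argument behind this lemma in the reference \cite{am-al} from which it is quoted (the paper itself states the lemma without proof): the energy identity for $y=z-w$, the cancellation of the $\int a|\dot w|^2$ terms upon expanding, the sign condition $s\,\rho(x,s)\ge 0$ coming from monotonicity and $\rho(\cdot,0)=0$, and the final Cauchy--Schwarz step all check out and yield exactly the constant $2$. The only point worth making explicit is that the multiplication by $\dot y$ is legitimate because $(w^0,w^1)\in H_1\times H_{1/2}$ gives strong solutions for both the nonlinear and the linear damped problems, and that the division by $\bigl(\int_0^T\int_\Omega a|\dot z|^2\bigr)^{1/2}$ in the Cauchy--Schwarz step needs the trivial remark that the inequality holds anyway when this quantity vanishes.
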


The next Lemma compares the localized observation for the
conservative undamped equation with the localized damping of the
linearly damped equation.

\begin{Lemma}\label{phiz}
Assume that $a \in \mathcal{C}(\overline{\Omega})$, with $a \ge 0$
on $\Omega$. Let $T>0$ be given, then there exists $k_T>0$ (given by
$k_T=8T^2||a||_{L^{\infty}(\Omega)}^2 + 2 \, $) such that for all
$(w^0,w^1) \in H_1 \times H_{1/2}$
\begin{equation}\label{phizobs}
\int_0^T\int_{\Omega}a|\dot \phi|^2\,dx\,dt \leq k_T
\int_0^T\int_{\Omega}a|\dot z|^2\,dx\,dt
\end{equation}
\noindent where $\phi$ is the solution of the conservative equation
\eqref{conservative} with $(\phi^0,\phi^1)=(w^0,w^1)$ and $z$ is the
solution of \eqref{lineardamped}.
\end{Lemma}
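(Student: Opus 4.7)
My plan is to introduce the difference $\psi = \phi - z$ and control it by a simple energy estimate. Since $\phi$ and $z$ satisfy \eqref{conservative} and \eqref{lineardamped} respectively with the same initial data $(w^0,w^1)$, subtracting the two equations gives
\begin{equation*}
\ddot\psi + A\psi = a(x)\dot z, \qquad \psi(0)=0,\ \dot\psi(0)=0.
\end{equation*}
Then $\dot\phi = \dot z + \dot\psi$, so pointwise $|\dot\phi|^2 \le 2|\dot z|^2 + 2|\dot\psi|^2$, and after multiplying by $a$ and integrating,
\begin{equation*}
\int_0^T\!\!\int_\Omega a\,|\dot\phi|^2\,dx\,dt \le 2\int_0^T\!\!\int_\Omega a\,|\dot z|^2\,dx\,dt + 2\int_0^T\!\!\int_\Omega a\,|\dot\psi|^2\,dx\,dt.
\end{equation*}
The whole lemma therefore reduces to controlling the last term by $\int_0^T\!\!\int_\Omega a\,|\dot z|^2\,dx\,dt$.

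The second step is an energy identity for $\psi$. Testing $\ddot\psi + A\psi = a\dot z$ with $\dot\psi$ and using that $A$ is self-adjoint and positive, the energy $E_\psi(t) = \tfrac12(\|\dot\psi(t)\|_H^2 + \|\psi(t)\|_{1/2}^2)$ satisfies $E_\psi(0)=0$ and
\begin{equation*}
\tfrac{d}{dt}E_\psi(t) = \langle a(x)\dot z(t), \dot\psi(t)\rangle_H \le \|a\dot z(t)\|_H\,\|\dot\psi(t)\|_H \le \|a\dot z(t)\|_H\,\sqrt{2E_\psi(t)}.
\end{equation*}
Dividing by $\sqrt{E_\psi}$ and integrating (the standard Gronwall trick for square-root estimates) yields
\begin{equation*}
\sqrt{E_\psi(t)} \le \tfrac{1}{\sqrt 2}\int_0^t \|a\dot z(s)\|_H\,ds,
\end{equation*}
so in particular $\|\dot\psi(t)\|_H^2 \le 2E_\psi(t) \le \bigl(\int_0^t \|a\dot z(s)\|_H\,ds\bigr)^2$.

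The third step converts this into a weighted estimate. By Cauchy--Schwarz in time and the obvious bound $\|a\dot z(s)\|_H^2 = \int_\Omega a^2 |\dot z|^2 \le \|a\|_{L^\infty(\Omega)} \int_\Omega a\,|\dot z|^2 dx$, one gets
\begin{equation*}
\|\dot\psi(t)\|_H^2 \le T \int_0^T \|a\dot z(s)\|_H^2\,ds \le T\,\|a\|_{L^\infty(\Omega)} \int_0^T\!\!\int_\Omega a\,|\dot z|^2 dx\,ds.
\end{equation*}
Integrating in $t \in [0,T]$ and then multiplying by $\|a\|_{L^\infty(\Omega)}$ to pass from $\int \|\dot\psi\|_H^2\,dt$ to $\int\!\!\int a\,|\dot\psi|^2\,dx\,dt$ produces a bound of the form $C\,T^2\,\|a\|_{L^\infty(\Omega)}^2 \int_0^T\!\!\int_\Omega a\,|\dot z|^2\,dx\,dt$. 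Combining with the additive $2\int\!\!\int a\,|\dot z|^2$ from the first step yields the estimate \eqref{phizobs} with $k_T$ of the form $cT^2\|a\|_{L^\infty}^2 + 2$.

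The argument is essentially routine; the only subtle point is the proper Gronwall step turning a quadratic-in-$\sqrt{E_\psi}$ bound into a linear one (avoiding a gratuitous exponential factor), and being slightly careful to replace $\|a\dot z\|_H^2$ by the weighted quantity $\int_\Omega a|\dot z|^2$ so the final constant depends on $\|a\|_\infty^2$ rather than involving a ratio that could blow up when $a$ vanishes outside $\omega$.
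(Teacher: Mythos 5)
Your proof is correct; the paper states this lemma without proof (it is borrowed from Alabau-Boussouira--Ammari \cite{am-al}), and your argument --- setting $\psi=\phi-z$, observing that it solves the conservative equation with source $a\dot z$ and zero data, and bounding $\|\dot\psi(t)\|_H$ by $\int_0^t\|a\dot z(s)\|_H\,ds$ --- is exactly the standard one, there phrased via Duhamel's formula and the contractivity of the conservative group, which incidentally sidesteps the division by $\sqrt{E_\psi}$ at instants where it vanishes (a point you should regularize, e.g.\ by working with $\sqrt{E_\psi+\varepsilon}$). Your resulting constant $2T^2\|a\|_{L^\infty(\Omega)}^2+2$ is smaller than the stated $k_T=8T^2\|a\|_{L^\infty(\Omega)}^2+2$, so \eqref{phizobs} follows a fortiori.
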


\section{Proof of the main results}
The following lemmas will be very useful.
\begin{Lemma}\label{comparison2}
Let $\delta>0$ and $M$ be an increasing and a non-negative function such that the
function defined by $\psi(x)= x - \rho_T M(x)$ is strictly
increasing on $[0,\delta]$, for some positive constant $\rho_T$.
Assume that $\widehat{E}$ is a nonnegative, nonincreasing function
defined on $[0,\infty)$ with $\widehat{E}(0) < \delta$ and
satisfying
\begin{equation}\label{recurrent1}
\widehat{E}((k+1)T) \leq \widehat{E} (kT) - \rho_T M(\widehat{E}
(kT)) \,, \quad \forall \ k \in \mathbb{N} \,.
\end{equation}
\noindent After we consider the sequence $(\widetilde{y_k})_k$
defined by induction as follows:
\begin{equation}\label{ytilde}
\begin{cases}
 \widetilde{y_{k+1}}- \widetilde{y_k} + \rho_T M(\widetilde{y_k})=0\,, k \in \mathbb{N}\,,\\
\widetilde{y_0}=E_0.
\end{cases}
\end{equation}
\noindent Then the following inequality holds
\begin{equation}\label{compEy}
E_k \le \widetilde{y_k} \,,
\end{equation}
here we set
\begin{equation}\label{EkT}
E_k=\widehat{E}(kT) \,, \quad \forall \ k \in \mathbb{N}.
\end{equation}
\end{Lemma}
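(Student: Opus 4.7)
The plan is to prove \eqref{compEy} by induction on $k$, exploiting the strict monotonicity of $\psi(x) = x - \rho_T M(x)$ on $[0,\delta]$. The key observation is that both recurrences share the same update map $\psi$: the hypothesis \eqref{recurrent1} rewrites as $E_{k+1} \le \psi(E_k)$, while \eqref{ytilde} is exactly $\widetilde{y_{k+1}} = \psi(\widetilde{y_k})$. Monotonicity of $\psi$ then transports the inequality forward one step at a time.

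For the base case $k=0$, equality holds by construction: $E_0 = \widehat{E}(0) = \widetilde{y_0}$, and moreover $0 \le E_0 < \delta$ by hypothesis. For the inductive step I would carry the stronger inductive assumption $0 \le E_k \le \widetilde{y_k} \le \delta$, so that $\psi$ can legitimately be applied to both quantities. Strict monotonicity then yields
\[
E_{k+1} \le \psi(E_k) \le \psi(\widetilde{y_k}) = \widetilde{y_{k+1}},
\]
which is the desired comparison at step $k+1$.

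To close the induction I also need $0 \le \widetilde{y_{k+1}} \le \delta$. The upper bound is immediate because $M \ge 0$ forces $(\widetilde{y_k})$ to be nonincreasing, hence $\widetilde{y_{k+1}} \le \widetilde{y_k} \le \delta$; the lower bound follows from the chain $\widetilde{y_{k+1}} = \psi(\widetilde{y_k}) \ge \psi(E_k) \ge E_{k+1} \ge 0$, the last inequality being the nonnegativity of $\widehat{E}$. The only subtle point is keeping $\widetilde{y_k}$ inside $[0,\delta]$ to justify the use of monotonicity of $\psi$, but this is not a real obstacle since the argument above produces that containment automatically alongside the main comparison, so no further analytic input is required.
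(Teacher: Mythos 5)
Your proof is correct and follows essentially the same route as the paper's: both rewrite the two recurrences in terms of $\psi$, induct on $k$, and use the monotonicity of $\psi$ on $[0,\delta]$ together with the fact that both sequences stay below $\delta$ (since $\widehat{E}$ is nonincreasing and $M\ge 0$ makes $(\widetilde{y_k})$ nonincreasing). Your version is in fact slightly more careful, since you explicitly verify the lower bound $\widetilde{y_{k}}\ge 0$ needed to apply the monotonicity of $\psi$ on $[0,\delta]$, a point the paper leaves implicit.
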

\begin{proof}
Since the sequence
$(\widetilde{y_k})_k$ satisfies \eqref{ytilde}, so we have
\begin{equation}\label{induction1}
E_{k+1} - \widetilde{y_{k+1}} \leq \psi(E_k)- \psi(\widetilde{y_k})
\,, \quad \forall \ k \in \mathbb{N}\,.
\end{equation}
We prove \eqref{compEy} par induction on $k$. Since $E_0 \le
\widetilde{y_0}$, \eqref{compEy} holds for $k=0$. Assume that
\eqref{compEy} holds at the order $k$. First, we remark that since
$\widehat{E}$ is nonincreasing and thanks to our assumption $E_0 <
\delta$, we have
$$
E_k < \delta \,, \quad \forall \ k \in \mathbb{N} \,.
$$
\noindent Moreover, it is easy to check that the sequence
$(\widetilde{y_k})_k$ is nonincreasing, so that
$$
\widetilde{y_k} \leq \widetilde{y_0}=E_0 < \delta \,, \quad \forall
\ k \in \mathbb{N} \,.
$$
\noindent Thanks to our choice of $\delta$, and since we make the
assumption that $E_k \leq \widetilde{y_k}$, we deduce that
$$
\psi(E_k)- \psi(\widetilde{y_k}) \le 0.
$$
\noindent Using this last estimate in \eqref{induction1}, we deduce
that \eqref{compEy} holds at the order $k+1$.
\end{proof}

\medskip

We now compare the sequence $(\widetilde{y_k})$ obtained using an
Euler scheme to the solution of the associated ordinary differential
equation at time $kT$.

\medskip

\begin{Lemma}\label{comparison3}
Assume the hypotheses of Lemma~\ref{comparison2}. We define $E_k$ as
in \eqref{EkT}. We consider the ordinary differential equation
\begin{equation}\label{diffy}
\begin{cases}
\displaystyle{y^{\prime}(s) + \frac{\rho_T}{T}M(y(s))= 0 \,,  \quad s \ge 0 \,,}\\
y(0)=E_0 \,
\end{cases}
\end{equation}
\noindent and set
\begin{equation}\label{sk}
s_k=kT \,, y_k=y(s_k) \,, \quad \forall \ k \in \mathbb{N}.
\end{equation}
\noindent Then we have for all $k$ in $\mathbb{N}$
\begin{equation}\label{yytilde}
\widetilde{y_k} \le y_k \,,
\end{equation}
\noindent where $(\widetilde{y_k})_k$ is defined by \eqref{ytilde}.
\end{Lemma}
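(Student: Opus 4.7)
The plan is to argue by induction on $k$, in the spirit of the previous lemma, but now comparing the explicit-Euler-type iteration \eqref{ytilde} against a sub-solution estimate for the continuous flow \eqref{diffy}. The base case $k=0$ is immediate, since $\widetilde{y_0}=E_0=y(0)=y_0$. For the inductive step, I would first observe that because $M\ge 0$, the ODE \eqref{diffy} gives $y'\le 0$, so $s\mapsto y(s)$ is nonincreasing, and furthermore $0\le y(s)\le E_0<\delta$ for all $s\ge 0$ (nonnegativity uses that $M(0)=0$, which is implicit, since otherwise the right-hand side of \eqref{diffy} is not consistent with the starting point of this circle of ideas; this means the stationary solution $0$ blocks $y$ from going negative).

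The key step is then a one-sided quadrature bound. Integrating \eqref{diffy} on $[kT,(k+1)T]$,
$$
y_{k+1}=y_k-\frac{\rho_T}{T}\int_{kT}^{(k+1)T}M(y(s))\,ds.
$$
Since $y$ is nonincreasing on $[kT,(k+1)T]$, one has $y(s)\le y(kT)=y_k$ for $s$ in that interval, and the nondecreasing character of $M$ gives $M(y(s))\le M(y_k)$. Plugging into the above,
$$
y_{k+1}\ge y_k-\rho_T M(y_k)=\psi(y_k),
$$
with $\psi$ the function appearing in Lemma \ref{comparison2}. Meanwhile \eqref{ytilde} is exactly $\widetilde{y_{k+1}}=\psi(\widetilde{y_k})$.

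To conclude, I would use that both $\widetilde{y_k}$ and $y_k$ lie in $[0,\delta]$: for the continuous solution this was checked above, and for the Euler sequence it was already established in the proof of Lemma \ref{comparison2} that $(\widetilde{y_k})$ is nonincreasing with $\widetilde{y_k}\le E_0<\delta$. Since $\psi$ is strictly increasing on $[0,\delta]$ and, by induction, $\widetilde{y_k}\le y_k$, we get $\psi(\widetilde{y_k})\le \psi(y_k)$, so
$$
\widetilde{y_{k+1}}=\psi(\widetilde{y_k})\le\psi(y_k)\le y_{k+1},
$$
closing the induction. The only real obstacle is the sub-solution estimate $y_{k+1}\ge\psi(y_k)$, which is a standard consistency-of-explicit-Euler-on-monotone-dissipative-flows argument; everything else is bookkeeping that the various iterates remain in the interval where $\psi$ is monotone.
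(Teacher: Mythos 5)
Your proof is correct and follows essentially the same route as the paper: integrate the ODE over $[s_k,s_{k+1}]$, use that $y$ is nonincreasing and $M$ nondecreasing to get the one-sided Euler bound $y_{k+1}\ge\psi(y_k)$, and close the induction via the monotonicity of $\psi$ on $[0,\delta]$. The only cosmetic difference is that the paper writes the integrated identity for the difference $y_{k+1}-\widetilde{y_{k+1}}$ directly rather than splitting it into the two relations $y_{k+1}\ge\psi(y_k)$ and $\widetilde{y_{k+1}}=\psi(\widetilde{y_k})$, which is the same computation.
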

\begin{proof}
We integrate \eqref{diffy} between $s_k$ and $s_{k+1}$ and compare
with the equation satisfied by $\widetilde{y_k}$. Thus we have
\begin{equation}\label{diffyytilde}
y_{k+1} - \widetilde{y_{k+1}} - (y_k - \widetilde{y_k}) +
\frac{\rho_T}{T} \int_{s_k}^{s_{k+1}} \Big(M(y(s)) -
M(\widetilde{y_k})\Big)\,ds=0 \,, \quad \forall \ k \in \mathbb{N}
\,.
\end{equation}
\noindent We prove \eqref{yytilde} by induction on $k$. The property
clearly holds for $k=0$. Assume that it holds at the order $k$.
Since $y$ is nonincreasing, we deduce that $y_k=y(s_k) \le y_0= E_0
< \delta$.  Thus
$$
y(s) \le y_k < \delta \,, \quad \forall \ s \in [s_k,s_{k+1}] \,.
$$
\noindent Since $M$ is nondecreasing, we deduce from
\eqref{diffyytilde} that
$$
\Big(\psi(y_k) - \psi(\widetilde{y_k})\Big)\le  y_{k+1} -
\widetilde{y_{k+1}}.
$$
\noindent Since we assume that \eqref{yytilde} holds at the order
$k$ and since $\psi$ is nondecreasing on $[0,\delta]$, we deduce
$$
0 \le \Big(\psi(y_k) - \psi(\widetilde{y_k})\Big).
$$
\noindent Using this last inequality in the above one, we prove
\eqref{yytilde} at the order $k+1$.
\end{proof}

We deduce from Lemmas~\ref{comparison2} and \ref{comparison3}
the following result.

\begin{Corollary}\label{Eky}
Assume the hypotheses of Lemma~\ref{comparison2} and
Lemma~\ref{comparison3}. Then we have
\begin{equation}\label{comp4}
E_k \le y(s_k) \,, \quad \forall \ k \in \mathbb{N} \,.
\end{equation}
\end{Corollary}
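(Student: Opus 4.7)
The plan is essentially a one-line chaining argument: the statement is literally the transitive composition of the two preceding lemmas, so the proof proposal is very short and there is no real obstacle.

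First I would invoke Lemma \ref{comparison2}, whose conclusion \eqref{compEy} gives $E_k \le \widetilde{y_k}$ for every $k \in \mathbb{N}$, where $(\widetilde{y_k})$ is the Euler sequence defined by \eqref{ytilde} with $\widetilde{y_0}=E_0$. Then I would apply Lemma \ref{comparison3}, whose conclusion \eqref{yytilde} compares this Euler sequence with the continuous-time solution of \eqref{diffy}, yielding $\widetilde{y_k} \le y_k = y(s_k)$ for every $k \in \mathbb{N}$. Chaining these two inequalities produces exactly \eqref{comp4}.

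The only thing to check is that the hypotheses needed by both lemmas are indeed in force: the function $M$ is increasing and nonnegative, $\psi(x) = x - \rho_T M(x)$ is strictly increasing on $[0,\delta]$, $\widehat{E}$ is nonincreasing with $\widehat{E}(0) = E_0 < \delta$ and satisfies the recurrence \eqref{recurrent1}, and the common initial data $\widetilde{y_0} = y(0) = E_0$ is used for both the Euler scheme and the ODE. All of these are already assumed in the statement, so nothing extra needs to be verified. There is no genuine obstacle here; the corollary is purely a bookkeeping statement combining \eqref{compEy} and \eqref{yytilde}.
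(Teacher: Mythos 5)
Your proof is correct and matches the paper's own reasoning: the corollary is stated there precisely as a deduction obtained by chaining the conclusion \eqref{compEy} of Lemma~\ref{comparison2} with the conclusion \eqref{yytilde} of Lemma~\ref{comparison3}. Nothing further is needed.
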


\medskip

The proof of the main result rely on the following abstract theorem
of which proof based on the previous lemmas is given in
\cite{am-al}.
\begin{Theorem}\label{discretcont}
Let $\eta>0$ and $T_0>0$ be fixed given real numbers and let $F$ be
strictly increasing function from  $[0,+\infty)$ onto $[0,\eta)$,
with $F(0)=0$ and $\lim_{y\rightarrow\infty} F(y)=\eta$. For any
$r\in (0, \eta)$, we define a function $K_r$ from $(0,r)$ on
$[0,\infty)$ by
\begin{equation}\label{Krbis}
K_r(\tau)=\int_{\tau}^r \frac{1}{v F^{-1}(v)}\,dv
\end{equation}
\noindent We also define
\begin{equation}\label{psirbis}
\psi_r(z)=z+ K_r\big(F\big(\frac{1}{z}\big)\big), \quad z \ge
\frac{1}{F^{-1}(r)}\,.
\end{equation}
Let $T>0$ and $\rho_{T}>0$ be given. Let $\delta>0$ be such that the
function defined by $x \mapsto x-\rho_T x F^{-1}(x)$ is strictly
increasing on $[0,\delta]$. Assume that $\widehat{E}$ is a
nonnegative, nonincreasing function defined on $[0,\infty)$ with
$\widehat{E}(0) < \delta$ and satisfying
\begin{equation}\label{recurrent1bis}
\widehat{E}((k+1)T) \leq \widehat{E} (kT) \Big(1 -  \rho_T
F^{-1}(\widehat{E} (kT))\Big) \,, \quad \forall \ k \in \mathbb{N}
\,.
\end{equation}
\noindent Then $\widehat{E}$ satisfies the upper estimate
\begin{equation}\label{general-decay2}
\widehat{E}(t) \le T
F\Big(\frac{1}{\psi_r^{-1}(\frac{(t-T)\rho_T}{T_0})}\Big) \ , \quad
\mbox{ for } t \mbox{ sufficiently large}.
\end{equation}
\end{Theorem}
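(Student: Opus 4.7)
The plan is to apply the comparison framework of Lemmas~\ref{comparison2}--\ref{comparison3} and Corollary~\ref{Eky} with the specific choice $M(x) := x F^{-1}(x)$, reducing the discrete inequality to the solution of a scalar ODE, and then to integrate that ODE explicitly by means of the primitive $K_r$.

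First I would verify that $M(x) = xF^{-1}(x)$ fits the hypotheses of Lemma~\ref{comparison2}: it is nonnegative and increasing since both $x \mapsto x$ and $F^{-1}$ are, and the function $\psi(x) = x - \rho_T M(x)$ is strictly increasing on $[0,\delta]$ by the assumption made on $\delta$ in the theorem. The recurrence \eqref{recurrent1bis} is precisely \eqref{recurrent1} with this $M$, so Corollary~\ref{Eky} yields $E_k := \widehat{E}(kT) \le y(kT)$, where $y$ solves
\begin{equation*}
y'(s) + \frac{\rho_T}{T}\, y(s) F^{-1}(y(s)) = 0, \qquad y(0) = E_0.
\end{equation*}

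Next I would integrate this ODE in closed form. Since $K_r'(\tau) = -1/(\tau F^{-1}(\tau))$, the chain rule gives $\frac{d}{ds} K_r(y(s)) = \rho_T/T$ whenever $y(s) \in (0,r)$. Choosing $r \in (0,\eta)$ large enough so that $E_0 < r$ (this uses the freedom in $r$), one obtains
\begin{equation*}
K_r(y(s)) = K_r(E_0) + \frac{\rho_T s}{T}, \qquad s \geq 0,
\end{equation*}
with $y(s) \to 0$ and $K_r(y(s)) \to +\infty$ as $s \to \infty$. Using the definition \eqref{psirbis} of $\psi_r$ with $z = 1/F^{-1}(y(s))$, so that $F(1/z) = y(s)$, this rewrites as
\begin{equation*}
\psi_r\!\left( \frac{1}{F^{-1}(y(s))} \right) \;=\; \frac{1}{F^{-1}(y(s))} + K_r(y(s)) \;\geq\; K_r(E_0) + \frac{\rho_T s}{T}.
\end{equation*}
Since $\psi_r$ is strictly increasing on $[1/F^{-1}(r),\infty)$ (as a sum of two increasing functions of $z$), inverting yields $F^{-1}(y(s)) \le 1/\psi_r^{-1}(K_r(E_0) + \rho_T s/T)$, whence
\begin{equation*}
y(s) \le F\!\left( \frac{1}{\psi_r^{-1}\bigl( K_r(E_0) + \rho_T s/T \bigr)} \right).
\end{equation*}

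Finally, I would combine this with the monotonicity bound $\widehat{E}(t) \le \widehat{E}(kT) \le y(kT)$ for $t \in [kT,(k+1)T]$ and absorb the additive constant $K_r(E_0)$ and the multiplicative constant $T/T_0$ into the affine shift $(t-T)\rho_T/T_0$ valid for $t$ sufficiently large, yielding the announced bound \eqref{general-decay2}. The main subtlety I anticipate lies exactly in this last bookkeeping step: one must arrange that $(t - T)\rho_T/T_0 \le K_r(E_0) + \rho_T \lfloor t/T \rfloor$ for large $t$, and that the prefactor $T$ in front of $F(\cdot)$ absorbs the discrete-to-continuous interpolation between consecutive multiples of $T$. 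Both are elementary consequences of the monotonicity of $\psi_r^{-1}$, $F$, and the divergence $K_r(y(s)) \to \infty$, but they fix the precise form of the stated estimate.
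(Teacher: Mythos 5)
Your argument is correct, and its first half coincides with the paper's: you take $M(x)=xF^{-1}(x)$, check the hypotheses of Lemma~\ref{comparison2}, invoke Corollary~\ref{Eky}, and integrate the ODE explicitly to get $K_r(y(s))=K_r(E_0)+\rho_T s/T$, hence $E_k\le K_r^{-1}\bigl(K_r(E_0)+\rho_T k\bigr)$ (the paper does the same with the normalization $T_0=T/\rho_T$ and $r=\widehat E(0)$, so that $y(t)=K_r^{-1}(t/T_0)$). Where you genuinely diverge is in converting this into the stated form \eqref{general-decay2}: you use the elementary pointwise inequality $K_r^{-1}(S)\le F\bigl(1/\psi_r^{-1}(S)\bigr)$, which follows from $\psi_r\bigl(1/F^{-1}(\tau)\bigr)=1/F^{-1}(\tau)+K_r(\tau)\ge K_r(\tau)$ together with the monotonicity of $\psi_r$, and this turns the Euler-comparison bound directly into the announced estimate. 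The paper instead re-uses the recurrence a second time: it sums \eqref{recurrent1bis} over $l+1$ consecutive steps to obtain $(l+1)TM(E_{k+l})\le T_0E_k$, inserts $E_{p-l}\le K_r^{-1}\bigl((p-l)T/T_0\bigr)$, and minimizes the resulting bound $\frac{T}{\theta}K_r^{-1}\bigl(\frac{t-T-\theta}{T_0}\bigr)$ over the splitting parameter $\theta$; the critical point is characterized by $\psi_r(\theta^*/T_0)=(t-T)/T_0$, which is how $\psi_r$ enters there. Both routes land on the same functional form; yours is shorter and makes transparent that $F(1/\psi_r^{-1}(\cdot))$ is simply a weaker but more explicit upper bound for $K_r^{-1}(\cdot)$, whereas the paper's optimization retains, as an intermediate step, the potentially sharper estimate $M(\widehat E(t))\le\inf_\theta\gamma_t(\theta)$. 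The remaining discrepancies are cosmetic and shared with the paper itself: the bookkeeping of $T$, $T_0$ and $\rho_T$ (the statement treats $T_0$ as free while the paper's proof sets $T_0=T/\rho_T$ and concludes with $(t-T)/T_0$, matching your $\rho_T(t-T)/T$), the harmless prefactor $T$, and the choice of $r$ (you take $r>E_0$ so that $K_r(E_0)\ge 0$ can be discarded, the paper takes $r=E_0$); none of these affects the validity of your proof.
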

 We repeat the proof for the reader's convenience.
\begin{proof}[Proof of Theorem~\ref{discretcont}] 
We set
\begin{equation}\label{T0}
T_0= \frac{T}{\rho_T} \,,\, r=\widehat{E}(0) \,, M(v)=v F^{-1}(v).
\end{equation}
\noindent Thus the solution $y$ of \eqref{diffy} is characterized as
\begin{equation}\label{carac}
y(t)=K_r^{-1}(\frac{t}{T_0}) \,, \quad \ t \ge 0 \,.
\end{equation}
\noindent On the other hand, we define $E_k$ by \eqref{EkT}. Then,
thanks to \eqref{recurrent1bis}, $E_k$ satisfies 
\begin{equation}
E_{k+1} \leq E_{k} \Big(1 -  \rho_T
F^{-1}(E_{k})\Big) \,, \quad \forall \ k \in \mathbb{N}
\,.
\end{equation} 
Let $l\in \mathbb{N}$ be an arbitrary fixed
integer. We have in particular
$$
E_{k+1+i} - E_{k+i} +  \rho_T M(E_{k+i}) \leq 0\,, \quad \mbox{for }
i=0 \ldots\,, i=l.
$$
\noindent Summing these inequalities from $i=0$ to $i=l$, and using
the fact that $(E_k)_k$ is a nonincreasing sequence whereas $M$ is a
nondecreasing function, we obtain
$$
E_{k+l+1} - E_k + \frac{1}{T_0} (l+1)T M(E_{k+l}) \le 0
$$
\noindent so that
\begin{equation}\label{discrete1}
(l+1) T M(E_{k+l}) \le T_0 E_k \,, \quad \forall \ k ,l \in
\mathbb{N}\,.
\end{equation}
\noindent In particular, we have for any arbitrary $p \in
\mathbb{N}$
\begin{equation}\label{discrete2}
M(E_p) \le \frac{T_0}{T} \inf_{l \in \{0,\ldots,p\}}\Big(
\frac{E_{p-l}}{l+1}\Big) \,.
\end{equation}
\noindent Now thanks to Corollary~\ref{Eky} and to \eqref{carac}, we
have
$$
E_i \le y_i=K_r^{-1}\Big(\frac{iT}{T_0}\Big) \,, \quad \forall \ i
\in \mathbb{N}.
$$
\noindent Using this last relation in \eqref{discrete2}, we deduce
that
\begin{equation}\label{discrete3}
M(E_p) \le \frac{T_0}{T} \inf_{l \in \{0,\ldots,p\}}\Big(
\frac{K_r^{-1}\big(\frac{(p-l)T}{T_0}\big)}{l+1}\Big) \,.
\end{equation}
\noindent Let now $t \ge T$ be given and $p \in \mathbb{N}$ be the
unique integer so that $t \in [pT,(p+1)T)$. Let $\theta \in (0,t-T]$
be arbitrary and $l \in \mathbb{N}$ be the unique integer so that $
\theta \in [lT, (l+1)T)$. Then, thanks to \eqref{discrete3} and by
construction, we have
$$
\displaystyle{M(\widehat{E}(t)) \le M(E_p) \le \frac{T_0}{T} \inf_{l
\in \{0,\ldots,p\}}\Big(
\frac{K_r^{-1}\big(\frac{(p-l)T}{T_0}\big)}{l+1}\Big) \,,}
$$
\noindent and
$$
\displaystyle{K_r^{-1}\left(\frac{(p-l)T}{T_0}\right) \le
K_r^{-1}\left(\frac{t-\theta - T}{T_0}\right) \,.}
$$
\noindent We deduce that
$$
\displaystyle{M(\widehat{E}(t)) \le \frac{T}{\theta} K_r^{-1}\Big(
\frac{t-T-\theta}{T_0}\Big) \,, \quad \forall \ \theta \in
(0,t-T]\,.}
$$
\noindent Using the fact that $M$ is strictly increasing, we obtain
$$
\displaystyle{\widehat{E}(t) \le T M^{-1} \left(\inf_{\theta \in (0,
(t-T)]} \left( \frac{1}{\theta}
K_r^{-1}\left(\frac{t-T-\theta}{T_0}\right)\right) \right)}.
$$
\noindent Let now $t>0$ be fixed for the moment and put
$\gamma_t(\theta)=\frac{1}{\theta}
K_r^{-1}\left(\frac{t-T-\theta}{T_0}\right)$. Thus $\theta^
*$ is a critical point of $\gamma_t$ if and only if it satisfies the
relation:
$$
K_r^{-1}\left (\frac{t-T-\theta^*}{T_0}\right ) + \frac{\theta^
*}{T_0 K_r^{'}K_r^{-1}\big (\frac{t-T-\theta^*}{T_0}\big )}=0.
$$
Hence $\theta^*$ is a critical point of $\gamma_t$ if and only if it
solves the equation

$$
K_r^{-1}\left (\frac{t-T-\theta^*}{T_0}\right ) = \frac{\theta^*}{T_0}
M \left (K_r^{-1}\left (\frac{t-T-\theta^*}{T_0}\right)\right).
$$

Using the definition of $M$, we deduce that $\theta^
*$ is a critical point of $\gamma_t$ if and only if it satisfies the following equation:
$$
\frac{T_0}{\theta^*}=
F^{-1}\left(K_r^{-1}\left(\frac{t-T-\theta^*}{T_0}\right)\right)
$$
Hence $\theta^
*$ is a critical point of $\gamma_t$ if and only if it verifies the following equation:
$$
\psi_r \left(\frac{\theta^*}{T_0}\right)=\frac{t-T}{T_0},
$$
and we obtain
$$
\displaystyle{\widehat{E}(t) \le  T
F\left(\frac{1}{\psi_r^{-1}\big(\frac{t-T}{T_0} \big)}\right) \,, \quad
\forall \ t \ge T \,.}
$$
\noindent So that \eqref{general-decay2} is proved.
\end{proof}

\medskip

\begin{proof}[Proof of Theorem~\ref{main}]
\begin{eqnarray*}
&&\int_0^T
f\left(\frac{E_\phi(0)}{||({\phi}^0,{\phi}^1)||^2_{H_1 \times H_{1/2}}} \right)\int_{\Omega}\Big(a(x)|\dot
w|^2 + a(x)|\rho(x,\dot w)|^2\Big)\,dx\,dt \\
&&\geq c_T
f \left(\frac{E_\phi(0)}{||({\phi}^0,{\phi}^1)||^2_{H_1 \times H_{1/2}}} \right)\int_0^T\int_{\Omega}
a(x)|\dot \phi|^2\,dx\,dt\\
&&\geq c_T \,
f \left(\frac{E_\phi(0)}{||({\phi}^0,{\phi}^1)||^2_{H_1 \times H_{1/2}}} \right)||({\phi}^0,{\phi}^1)||^2_{H_{1/2}
\times H} \, {\mathcal G} \left(\frac{||({\phi}^0,{\phi}^1)||^2_{X_1
\times X_2}} {||({\phi}^0,{\phi}^1)||^2_{H_{1/2} \times H}} \right)\\
&&\geq c_T \,
f \left(\frac{E_\phi(0)}{||({\phi}^0,{\phi}^1)||^2_{H_1 \times H_{1/2}}} \right)||({\phi}^0,{\phi}^1)||^2_{H_{1/2}
\times H} \, {\mathcal G}
\left(\left(\frac{||({\phi}^0,{\phi}^1)||^2_{H_{1/2} \times
H}}{||({\phi}^0,{\phi}^1)||^2_{H_1 \times H_{1/2}}} \right)^{\frac{1}{\theta
}-1}\right),
\end{eqnarray*}
here $c_T = \frac{1}{2k_T}$.\\

\noindent Since
$$R^* \left(f \left(\frac{E_\phi(0)}{||({\phi}^0,{\phi}^1)||^2_{H_1 \times H_{1/2}}} \right) \right)=
\frac{E_\phi(0)}{\beta
||({\phi}^0,{\phi}^1)||^2_{H_1 \times H_{1/2}}}f \left(\frac{E_\phi(0)}
{||({\phi}^0,{\phi}^1)||^2_{H_1 \times H_{1/2}}} \right),$$ this together with
(\ref{kinetic}) and the definition of the weight function $f$ lead
to:
\begin{equation}\label{obs3}
C_T E_\phi(0)f(\widehat{E}_w (0)) {\mathcal G}((\widehat{E}_w
(0))^{\frac{1}{\theta}-1}) \leq \frac{C_T}{\beta} \widehat{E}_w
(0)f(\widehat{E}_w (0))+ C_7(E_w (0)-E_w (T)), \,
\end{equation}
\noindent where we put
$\widehat{E}_w (0)= \frac{E_\phi(0)}{||({\phi}^0,{\phi}^1)||^2_{H_1 \times H_{1/2}}}$. Moreover,
\begin{equation}\label{obs37}
C_T \widehat{E}_w (0)f(\widehat{E}_w (0)) \mathcal{G} ((\widehat{E}_w
(0))^{\frac{1}{\theta}-1}) \leq C_T \frac{\widehat{E}_w
(0)f(\widehat{E}_w (0))}{\beta ||({\phi}^0,{\phi}^1)||^2_{H_1 \times H_{1/2}}}+
C_7(\widehat{E}_w (0)-\widehat{E}_w (T)). \,
\end{equation}
gives
\begin{equation}
\widehat{E}_w (T)\leq \widehat{E}_w (0)[1-\big (C'_T
{\mathcal G}((\widehat{E}_w (0))^{\frac{1}{\theta} -1})-\frac{C_8 T}{\beta
||({\phi}^0,{\phi}^1)||^2_{H_1 \times H_{1/2}}}\big )f(\widehat{E}_w (0)).
\end{equation}
Choose $\beta$ so that $\big (C'_T {\mathcal G}((\widehat{E}_w
(0))^{\frac{1}{\theta}-1})-\frac{C_8 T}{\beta
||({\phi}^0,{\phi}^1)||^2_{H_1 \times H_{1/2}}}\big )> C''_T {\mathcal G}((\widehat{E}_w
(0))^{\frac{1}{\theta}-1})$.

\noindent Hence
\begin{equation}
\widehat{E}_w (T)\leq \widehat{E}_w (0)[1-\big (C''_T
{\mathcal G}((\widehat{E}_w (0)^{\frac{1}{\theta}-1}))f(\widehat{E}_w (0))].
\end{equation}
Make use of Theorem~\ref{discretcont}, the proof is complete.
\end{proof}

\begin{proof}[Proof of Theorem \ref{mainbis}]
The proof is a simple adaptation of the proof of Theorem \ref{main}.
\end{proof}

\section{Some applications}\label{appls}
We give applications of Theorems \ref{main} and \ref{mainbis}. In the next result, we denote by $C$ a positive constant depending on $E(0)$ and $T$.
Also, we give only the expression of $g$ in a right neighbourhood of $0$, since as long as
$g$ has a linear growth at infinity, the asymptotic behavior of the
energy depends only on the behavior of $g$ close to $0$.

\medskip

We assume that $\rho$ and $a$ satisfy assumption ${\bf{(A1)}}$.
We assume that there exists $T>0$ such that the solution of \eqref{conservative} satisfies the weak observability inequality
\eqref{ineq-obs} for example 1 below and the assumption \eqref{ineq-obsbis} for examples 2 and 3. Then, we have the following results:

\subsection{Example 1}
Let $g$ be given by $g(x)=x^p$, $p>1$ on $(0,r_0]$. 
Then the energy of solution of \eqref{damped1} satisfies the estimate
$$
E_w (t)\leq C \, \left(x \mapsto x^{\frac{p-1}{2}}{\mathcal G}_{\theta}(x)\right)^{-1}\left(\frac{1}{t+1} \right),
$$
for $t$ sufficiently large and for all any non-identically zero initial data $(w^0,w^1) \in H_{1} \times H_{1/2}$.

\subsection{Example 2}
Let $g$ and ${\mathcal H}$ are given by $g(x)=x^p$, $p>1$ on $(0,r_0]$. Then the energy of solution of (\ref{damped1}) satisfies the estimate
$$E_w (t)\leq \, \left(x \mapsto x^{\frac{p-1}{2}}{\mathcal H}(x) \right)^{-1} \left(\frac{1}{t+1}\right),$$
for $t$ sufficiently large and for all any non-identically zero initial data $(w^0,w^1) \in H_{1} \times H_{1/2}$.\\
\underline{Particular case}: For ${\mathcal H}(x) = \exp \left(-\frac{C}{x^{\frac{1}{p}}} \right)$, $C, p>0$ the last estimate becomes
$$E_w (t)\leq \, \frac{C}{(\ln(1+t))^{p}}.$$

\subsection{Example 3}
Let $g$ be given by $g(x)= x^3 \, \exp \left(-\frac{1}{x^2} \right)$. Then the energy of solution of \eqref{damped1} satisfies the estimate
$$
E_w (t)\leq C \, \left(x \mapsto \exp \left(-\frac{1}{x}\right)\mathcal{H}(x)\right)^{-1}\left(\frac{1}{1+t}\right),
$$
for $t$ sufficiently large and for all any non-identically zero initial data $(w^0,w^1) \in H_{1} \times H_{1/2}$.\\
\underline{Particular cases}: For ${\mathcal H}(x)= \exp \left(-\frac{C}{x^{\frac{1}{p}}} \right)$, $C, p>0$ the last estimate becomes :
$$E_w (t)\leq \, \frac{C}{\ln(1+t)}, \mbox{ for } p\geq 1,$$
and
$$E_w (t)\leq \, \frac{C}{(\ln(1+t))^{p}}, \mbox{ for } p < 1.$$

\subsection{Example 4}
Here we consider the following initial and boundary problem:
\begin{equation}\label{dampedw}
\begin{cases}
u_{tt} - \Delta u + 
a(x)\rho(x, u_t) = 0, \, \quad  (x,t) \in \Omega \times
(0, + \infty), \\[0.8mm]
u = 0 \,, \quad \mbox{on } \partial \Omega \times (0,+ \infty),\\[0.8mm]
u(x,0) = u^0(x), \, u_t(x,0) = u^1(x) \,,  \quad \mbox{on } \Omega,
\end{cases}
\end{equation}
where $\rho$ and $a$ satisfy assumption $({\bf A 1})$ and 
$\Omega$ is a convex bounded open set of $\mathbb{R}^N$ of class $\mathcal{C}^2$.

\noindent
In this case, we have: 
$$
A = - \, \Delta : D(A) \subset H = L^2(\Omega) \rightarrow L^2(\Omega), \, H_1 = D(A) = H^2(\Omega) \cap H^1_0 (\Omega), 
$$
$
H_{1/2} = H^1_0(\Omega)
$
and $A$ is a selfadjoint operator satisfying (\ref{eq:opA}).

Moreover the conservative equation (\ref{conservative}) becomes in this case:

\begin{equation}\label{conservativepb}
\begin{cases}
\phi_{tt} -  \Delta \phi = 0 \,, 
\quad \Omega \times (0,+\infty)\,,\\
\phi=0,  \quad \partial \Omega \times (0,+\infty), \\ 
\phi(x, 0) = \phi^0(x), \phi_t(x, 0) = \phi^1(x),  
\quad \Omega. 
\end{cases}
\end{equation}

According to \cite{phunghdr} we show that the observability inequality is given by
\begin{Proposition}
For all $\beta \in ]0,1[$ there exists $T$ and $c_T >0$ such that the following observabilty inequality holds:
\begin{eqnarray}\label{obs}
&&||({\phi}^0,{\phi}^1)||^2_{\left[H^2(\Omega) \cap H^1_0(\Omega) \right] \times H^1_0(\Omega)} \, \exp {\left[ - c_T \, \left(
\frac{||({\phi}^0,{\phi}^1)||_{\left[H^2(\Omega) \cap H^1_0(\Omega) \right] \times H^1_0 (\Omega)}}{||({\phi}^0,{\phi}^1)||_{H^1_0(\Omega)\times L^2 (\Omega)}}
\right)^{1/\beta}\right]} \nonumber \\  
&&  \hspace{1cm}\leq\int_0^T |\sqrt{a} \dot \phi|^2_H  \,dt \,, \quad
\end{eqnarray}
for all non-identically zero initial data
$(\phi^0,\phi^1) \in \left[H^2(\Omega) \cap H^1_0(\Omega) \right] \times H^1_0 (\Omega).$
\end{Proposition}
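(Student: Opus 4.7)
The plan is to derive (\ref{obs}) from the weak observability theory of Phung \cite{phunghdr} for the Dirichlet wave equation on bounded convex (or $\mathcal{C}^{1,1}$) domains. Since $A=-\Delta$ with Dirichlet boundary condition identifies $H_{1/2}\times H = H^1_0(\Omega)\times L^2(\Omega)$ and $H_1\times H_{1/2} = [H^2(\Omega)\cap H^1_0(\Omega)]\times H^1_0(\Omega)$, it suffices to produce an inequality of exactly the form (\ref{obs}) for solutions $\phi$ of (\ref{conservativepb}) with observation weight $\sqrt{a}$, where $a\ge a_0>0$ on~$\omega$.

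First, I would apply a global Carleman estimate for the wave operator $\partial_{tt}-\Delta$ on $\Omega\times(0,T)$, with a weight of the form $\varphi(x,t)=e^{\mu\psi(x,t)}$, where $\psi$ is a Hörmander-pseudo-convex function whose level sets are transverse to the bicharacteristic flow. The convexity or $\mathcal{C}^{1,1}$-regularity of $\Omega$ allows the construction of such a $\psi$ with no critical points on $\overline{\Omega}\setminus\omega$. After absorbing boundary and lower-order contributions, the Carleman estimate yields a three-term interpolation between the $H^1_0\times L^2$ energy of $(\phi^0,\phi^1)$, the stronger $[H^2\cap H^1_0]\times H^1_0$ norm, and the observation integral $\int_0^T|\sqrt{a}\,\dot\phi|^2_H\,dt$. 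A telescoping/frequency-localization argument then upgrades this interpolation, for any prescribed $\beta\in(0,1)$, to a logarithmic-type weak observability of the form $\|(\phi^0,\phi^1)\|^2_{H^1_0\times L^2}\le C\int_0^T|\sqrt{a}\,\dot\phi|^2_H\,dt\,/\,(\log(1+Y/X))^{2\beta}$, where $X$ and $Y$ denote the lower and higher Hilbertian norms of the data, respectively.

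The final step is a purely algebraic reformulation: setting $\rho=Y/X\ge 1$, the elementary comparison $\rho^{-2}\bigl(\log(1+\rho)\bigr)^{2\beta}\ge c\,\exp(-c_T\rho^{1/\beta})$ (valid for suitable positive constants $c,c_T$, since any decaying exponential in $\rho^{1/\beta}$ is eventually dominated by a polynomial times a logarithm) transforms the logarithmic bound into $Y^2\exp\bigl[-c_T(Y/X)^{1/\beta}\bigr]\le C\int_0^T|\sqrt{a}\,\dot\phi|^2_H\,dt$, which is precisely (\ref{obs}) after absorbing the constants.

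The main obstacle is the Carleman step: constructing a pseudo-convex weight up to $\partial\Omega$ in the $\mathcal{C}^{1,1}$ case and deriving the accompanying boundary Carleman estimate is delicate, because the standard microlocal constructions must be adapted to a boundary with only Lipschitz normal, and the polynomial-in-$s$ gain must be tracked carefully enough to permit the iteration producing the exponent $2\beta$. Since this technical analysis is carried out in full in \cite{phunghdr}, in the present paper I would invoke that reference directly and limit the proof to the identification of the function spaces together with the algebraic conversion from the logarithmic to the exponential form outlined above.
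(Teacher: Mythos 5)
The paper offers no argument for this Proposition beyond the citation of \cite{phunghdr}: the statement is presented as a direct consequence of Phung's quantitative observability results for the Dirichlet wave equation on a convex (or $\mathcal{C}^{1,1}$) domain, after identifying $H_{1/2}\times H=H^1_0(\Omega)\times L^2(\Omega)$ and $H_1\times H_{1/2}=\left[H^2(\Omega)\cap H^1_0(\Omega)\right]\times H^1_0(\Omega)$. Your overall strategy --- invoke \cite{phunghdr}, identify the spaces, and rearrange the resulting estimate into the form \rfb{obs} --- therefore coincides with what the paper does.

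However, the intermediate inequality you pass through is not correct as stated. Writing $X=\|(\phi^0,\phi^1)\|_{H^1_0\times L^2}$ and $Y=\|(\phi^0,\phi^1)\|_{[H^2\cap H^1_0]\times H^1_0}$, you claim a ``logarithmic-type weak observability''
\[
X^2\ \le\ \frac{C\int_0^T|\sqrt{a}\,\dot\phi|^2_H\,dt}{\bigl(\log(1+Y/X)\bigr)^{2\beta}}.
\]
Since $Y/X$ is bounded below by a positive constant, so is $\log(1+Y/X)$, and this inequality is at least as strong as the uniform observability estimate $X^2\le C\int_0^T|\sqrt{a}\,\dot\phi|^2_H\,dt$; for high-frequency data it is strictly stronger, since it requires the observation to \emph{grow} relative to the energy as $Y/X\to\infty$. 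Such an estimate fails whenever the geometric control condition fails (e.g.\ for Gaussian beams concentrated on rays avoiding $\overline{\omega}$, the observation is smaller than any power of $X/Y$ times $X^2$), and handling exactly that situation is the purpose of the weak observability framework here. The estimate actually furnished by \cite{phunghdr} is the exponential-cost form $X^2\le C\,e^{C(Y/X)^{1/\beta}}\int_0^T|\sqrt{a}\,\dot\phi|^2_H\,dt$ --- equivalently a logarithmic form with the ratio $Y^2/\int_0^T|\sqrt{a}\,\dot\phi|^2_H\,dt$, not $Y/X$, inside the logarithm. Your final algebraic step should be run from that form: with $\rho=Y/X$ one has $Y^2e^{-c_T\rho^{1/\beta}}=\rho^2X^2e^{-c_T\rho^{1/\beta}}\le C\rho^2e^{(C-c_T)\rho^{1/\beta}}\int_0^T|\sqrt{a}\,\dot\phi|^2_H\,dt\le C'\int_0^T|\sqrt{a}\,\dot\phi|^2_H\,dt$ as soon as $c_T>C$, which is precisely \rfb{obs}. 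With that correction the argument is sound; the Carleman/unique-continuation machinery is in any case delegated to \cite{phunghdr}, exactly as in the paper.
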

We remark here that we have \eqref{ineq-obsbis} for ${\mathcal H}(x) = \exp(- \frac{c_T}{x^{1/2\beta}}), \, \forall \, x > 0.$

\medskip

Thus according to Theorem \ref{mainbis} we have the following stabilization result for the nonlinear damped wave equation as in \cite{kim,daoulatli,bel}.

\begin{Theorem} 
We suppose that $meas(supp \, a) \neq 0$. Then, the energy of solution of (\ref{dampedw}) satisfies for all $\beta \in ]0,1[$ the estimate:
\begin{equation}\label{general-decaybis}
E_w(t) \le  \frac{C}{(\ln ( 1 + t))^{2\beta}} \ , \quad \mbox{
for } t \mbox{ sufficiently large}
\end{equation}
and for all any non-identically zero initial data $(u^0,u^1) \in \left[H^2(\Omega) \cap H^1_0(\Omega)\right] \times H^1_0 (\Omega)$.
\end{Theorem}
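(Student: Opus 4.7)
The plan is to realize the abstract framework of Theorem~\ref{mainbis} in the concrete wave-equation setting of (\ref{dampedw}), and then to extract the logarithmic rate $C/(\ln(1+t))^{2\beta}$ from the implicit estimate (\ref{general-decaybisbis}) by asymptotic analysis.

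First, I would fix the functional setting: take $H=L^2(\Omega)$ and $A=-\Delta$ with $D(A)=H^2(\Omega)\cap H^1_0(\Omega)$, so that $H_1=D(A)$ and $H_{1/2}=H^1_0(\Omega)$. Because $\Omega$ is convex and of class $\mathcal{C}^2$, $A$ is self-adjoint, positive definite, and (\ref{eq:opA}) holds; hypothesis (A1) is in force by the assumption on $\rho$ and $a$. Second, I would read the cited Phung-type observability Proposition as an instance of (\ref{ineq-obsbis}): after dividing by $\|(\phi^0,\phi^1)\|^2_{H_1\times H_{1/2}}$, it is exactly the required inequality with
\[
\mathcal{H}(x)=\exp\!\left(-\frac{c_T}{x^{1/(2\beta)}}\right),\qquad x>0,
\]
which is continuous, strictly increasing and invertible on $\mathbb{R}_+$. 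Thus (A3) holds, and Theorem~\ref{mainbis} yields
\[
E_w(t)\le \beta T\,(f\mathcal{H})^{-1}\!\left(\frac{1}{\Psi_r^{-1}((t-T)/T_0)}\right)
\]
for all sufficiently large $t$.

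The substantive work is then the asymptotic reduction. The reason the rate $(\ln(1+t))^{-2\beta}$ is universal here is that $\mathcal{H}$ is super-polynomially small at $0$: any $f$ of polynomial type coming from (A1) (via $R(x)=\sqrt{x}\,g(\sqrt{x})$ and its convex conjugate) alters only the constant inside the exponential, so that
\[
(f\mathcal{H})^{-1}(y)\sim \frac{C_1}{(\ln(1/y))^{2\beta}}\quad\text{as } y\to 0^+.
\]
Inserting this into $\mathcal{K}_r(\tau)=\int_\tau^{r}\frac{dv}{v\,(f\mathcal{H})^{-1}(v)}$ and changing variables $u=\ln(1/v)$ yields $\mathcal{K}_r(\tau)\sim C_2(\ln(1/\tau))^{2\beta+1}$ as $\tau\to 0^+$. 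Since $f\mathcal{H}(1/z)$ behaves like $\exp(-c'z^{1/(2\beta)})$ for large $z$, this produces $\Psi_r(z)\sim C_3\,z^{1+1/(2\beta)}$, whence $\Psi_r^{-1}(s)\sim C_4\,s^{2\beta/(2\beta+1)}$ for large $s$. Composing this with $(f\mathcal{H})^{-1}$, the polynomial factor $t^{2\beta/(2\beta+1)}$ sits inside the logarithm and contributes only a further multiplicative constant, producing the claimed bound $E_w(t)\le C/(\ln(1+t))^{2\beta}$.

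The main obstacle is precisely this last asymptotic step: one must check that the exact shape of $g$ (and hence of the weight $f$) influences only the prefactors and the constant inside the exponential defining $\mathcal{H}$, never the exponent $2\beta$ of the final logarithm. Concretely, I expect to spend most of the effort verifying that the inversion of $f\mathcal{H}$, of $\mathcal{K}_r$, and of $\Psi_r$ can be composed cleanly, and that the singular integral defining $\mathcal{K}_r$ indeed behaves like a power of $\ln(1/\tau)$ rather than developing an extra $g$-dependent correction that would degrade the exponent. Once these routine but delicate equivalences are established, Theorem~\ref{mainbis} delivers (\ref{general-decaybis}) with constants $C$ depending on $E(0)$ and $T$.
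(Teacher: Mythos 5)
Your proposal follows the same route as the paper: set $A=-\Delta$ with $H_1=H^2(\Omega)\cap H^1_0(\Omega)$ and $H_{1/2}=H^1_0(\Omega)$, read the Phung-type observability inequality \eqref{obs} as assumption $({\bf A3})$ with ${\mathcal H}(x)=\exp(-c_T x^{-1/(2\beta)})$, invoke Theorem~\ref{mainbis}, and then extract the rate $(\ln(1+t))^{-2\beta}$ by the same asymptotic inversion of $f{\mathcal H}$, ${\mathcal K}_r$ and $\Psi_r$ that underlies the ``particular case'' of Example~2; your computation of these equivalences is correct and in fact more explicit than the paper, which asserts the conclusion without carrying it out. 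The only caveat is the one you yourself flag: the assertion that the weight $f$ affects only constants requires $g$ to be of polynomial type near $0$ (the paper's own Example~3 shows that an exponentially flat $g$ can lower the logarithmic exponent for $\beta$ close to $1$), but the paper's statement glosses over this point in exactly the same way, so this is a shared imprecision rather than a defect specific to your argument.
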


\section{Appendix: Weak stabilization of linear evolution systems} \label{prel'}
Let $H$ be a Hilbert space with the norm $||.||_H$, and let $A:{\mathcal D}(A) \subset H \rightarrow H$ be a self-adjoint, positive and boundedly invertible operator. We also introduce the scale of Hilbert spaces
$H_{\alpha}$, as follows\m: for every $\alpha\geq 0$,
$H_{\alpha}=\mathcal{D}(A^{\alpha})$, with the norm $\|z
\|_{\alpha}=\|A^\alpha z\|_{H}$. The space $H_{-\alpha},$ is defined
by duality with respect to the pivot space $H$ as follows\m:
$H_{-\alpha} =H_{\alpha}^*,$ for $\alpha>0$.

Let the bounded linear operator $B : U \rightarrow H$, where $U$ is another Hilbert space
which will be identified with its dual.

The system we consider is described by
\begin{equation}
\label{damped}
\ddot{w}(t) + A w(t) + BB^* \dot{w}(t) = 0, \, w(0) = w_0,  \dot{w}(0) = w_1, \, t \in [0,\infty),
\end{equation}
The system (\ref{damped}) is well-posed:\\
\noindent
For $(w_0,w_1) \in H_\half \times H,$ the problem \rfb{damped}  admet a unique solution
$$w \in C([0,\infty);H_\half \times H)$$
such that $B^* \dot{w}(\cdot)\in L^2_{loc}(0,+\infty;U)$. Moreover, $w$ satisfies the energy estimate,
for all $t\geq 0$
\begin{equation}
\|(w_0,w_1)\|^2_{H_\half \times H}-
\|(w(t),\dot{w}(t))\|^2_{H_\half \times H} = 2 \, \int_0^t
\left\|B^* \dot{w}(s)\right\|_{U}^2\, ds \, .
\label{ESTEN}
\end{equation}
For (\ref{ESTEN}) we remark that the mapping $t \mapsto
\|(w(t),\dot{w}(t))\|^2_{H_\half \times H}$ is non-increasing. \\
\noindent
Consider the initial value problem :
\begin{equation}
\ddot\varphi(t) + A \varphi(t) = 0, 
\label{eq3}\ee
\be \varphi(0) = \varphi_0, \dot{\varphi}(0) = \varphi_1. 
\label{eq4}
\end{equation}
It is well known that (\ref{eq3})-(\ref{eq4}) is well posed in $H_1 \times H_\half$ and in $H_\half \times H$. \\
\noindent
Now, we consider the unbounded linear operator
\begin{equation}
\label{opa}
{\mathcal A}_d : {\mathcal D}({\mathcal A}_d) \subset H_\half \times H \rightarrow H_\half \times H , \,
{\mathcal A}_d =  
\left( \begin{matrix} I & 0 \cr - A & - BB^* \end{matrix} \right),
\end{equation}
where
\[
{\mathcal D}({\mathcal A}_d) = H_1 \times H_\half.
\]
Let ${\mathcal H}: \mathbb{R}_+ \rightarrow \mathbb{R}_+$ such that 
${\mathcal H}$ is continuous, invertible, increasing on $\mathbb{R}_+$ and suppose that the function $x \mapsto \frac{1}{x} \, {\mathcal H}(x)$  is increasing on $(0,1).$ \\

\noindent
In the case of non exponential decay in the energy space we have the
explicit decay estimate valid for regular initial data, which is a simple adaptation of \cite[Theorem 2.4]{ammari}.
\begin{Theorem} \label{princ3} 
Assume that the function ${\mathcal H}$ 
satisfies the assumptions above. Then the following assertion holds true:\\

\noindent
If for all non-identically zero initial data
$(\varphi_0,\varphi_1) \in H_1 \times H_\half$ we have
\be 
\int_{0}^{T} ||B^* \dot{\varphi} (t)||^2_{U} \, dt \geq C \, 
||(\varphi_0,\varphi_1)||^2_{H_1 \times H_\half} \, {\mathcal H} \left(\frac{||(\varphi_0, \varphi_1)||^2_{H_\half \times H}}
{||(\varphi_0,\varphi_1)||^2_{H_1 \times H_\half}} \right),
\label{nunif1}
\ee
for some constant $C > 0$ then there exists a constant $C_1 > 0$ such that for all $t > 0$ and for all non-identically zero initial data
$(w_0,w_1) \in H_1 \times H_\half$ we have
\begin{equation}
\left\|(w(t),\dot{w}(t))\right\|^2_{H_\half \times H} \leq C_1 \,{\mathcal H}^{-1} \left(\frac{1}{1 + t} \right) \, 
||(w_0,w_1)||^2_{H_1 \times H_\half}.
\label{nunif2}\end{equation}
\end{Theorem}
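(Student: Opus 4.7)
The plan is to adapt the Ammari--Tucsnak energy-comparison method of \cite{ammari} to accommodate the general weight $\mathcal{H}$. Fix non-identically zero initial data $(w_0,w_1)\in H_1\times H_\half$, let $w$ solve \eqref{damped}, and let $\varphi$ solve \eqref{eq3}--\eqref{eq4} with the same initial data $(\varphi_0,\varphi_1)=(w_0,w_1)$. Set $E(t):=\|(w(t),\dot w(t))\|^2_{H_\half\times H}$ and $F(t):=\|(w(t),\dot w(t))\|^2_{H_1\times H_\half}$. First I would transfer observability from the conservative system to the damped one: the difference $\psi:=\varphi-w$ satisfies $\ddot\psi+A\psi=BB^*\dot w$ with $\psi(0)=\dot\psi(0)=0$, and multiplying this equation by $\dot\psi$ together with Cauchy--Schwarz and the boundedness of $B$ yields the Haraux-type estimate
\[
\int_0^T\|B^*\dot\varphi(t)\|_U^2\,dt\le k_T\int_0^T\|B^*\dot w(t)\|_U^2\,dt,
\]
with $k_T$ depending only on $T$ and $\|B\|$. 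Combining this with the assumption \eqref{nunif1} applied to $\varphi$, and the energy identity \eqref{ESTEN} for $w$, produces
\[
\tilde C\,F(0)\,\mathcal{H}\!\left(E(0)/F(0)\right)\le E(0)-E(T).
\]

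The second step is to show that the higher-regularity norm stays bounded, namely $F(t)\le M F(0)$ for all $t\ge0$, with $M$ depending only on $\|B\|$ and $\|A^{-1}\|$. Differentiating \eqref{damped} in $t$, the pair $(\dot w,\ddot w)$ is seen to solve the \emph{same} damped system with initial data $(w_1,-Aw_0-BB^*w_1)\in H_\half\times H$; hence \eqref{ESTEN} at this higher level shows that $\|(\dot w(t),\ddot w(t))\|^2_{H_\half\times H}$ is nonincreasing. Since $Aw(t)=-\ddot w(t)-BB^*\dot w(t)$ and $A$ is boundedly invertible, reconstructing $\|Aw(t)\|_H$ gives the claimed uniform bound on $F(t)$.

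Iterating Step 1 on each interval $[kT,(k+1)T]$, and setting $E_k:=E(kT)$, $F_k:=F(kT)$, produces the recursion $E_k-E_{k+1}\ge \tilde C\,F_k\,\mathcal{H}(E_k/F_k)$. Here the assumption that $x\mapsto \mathcal{H}(x)/x$ is increasing on $(0,1)$ plays a crucial role: combined with $F_k\le MF_0$ it gives $F_k\,\mathcal{H}(E_k/F_k)\ge MF_0\,\mathcal{H}(E_k/(MF_0))$, so that the normalised sequence $u_k:=E_k/(MF_0)$ satisfies
\[
u_k-u_{k+1}\ge \rho\,\mathcal{H}(u_k),\qquad \rho:=\tilde C/M.
\]
Since $(u_k)$ is nonincreasing and $\mathcal{H}$ is increasing, summing from $0$ to $p$ telescopes to $\rho\,p\,\mathcal{H}(u_p)\le u_0$, hence $u_p\le \mathcal{H}^{-1}(C_2/(1+p))$. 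The same monotonicity of $\mathcal{H}(x)/x$ (equivalent to $\mathcal{H}^{-1}(Cy)\le C\,\mathcal{H}^{-1}(y)$ for $C\ge 1$) absorbs the constant $C_2$ inside $\mathcal{H}^{-1}$, and interpolating in $t$ via the nonincrease of $E$ from \eqref{ESTEN} delivers \eqref{nunif2}.

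The main obstacle is the interaction between Step 2 and Step 3: the observability weight $\mathcal{H}$ is evaluated at the ratio $E_k/F_k$ (higher regularity over current energy), while the decay \eqref{nunif2} is stated in terms of $E_k/F_0$; bridging the two requires simultaneously the uniform bound $F_k\le MF_0$ and the correct direction of monotonicity of $x\mapsto \mathcal{H}(x)/x$, without either of which the iteration collapses. Once these two ingredients are in place, the passage from the discrete-time recursion to the continuous-time statement is routine and follows the template of Theorem~\ref{discretcont}.
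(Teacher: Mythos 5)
Your proposal is correct and follows essentially the same route as the paper: transfer of the observability inequality from the conservative to the damped system (your direct argument via $\psi=\varphi-w$ is exactly the content of the cited Lemma 4.1 of Ammari--Tucsnak), the energy identity \eqref{ESTEN}, the uniform bound $F(t)\le M F(0)$ (which the paper obtains from the contraction of the semigroup on ${\mathcal D}({\mathcal A}_d)$, equivalent to your differentiation argument), iteration on $[kT,(k+1)T]$, and the monotonicity of $x\mapsto {\mathcal H}(x)/x$. The only cosmetic difference is the endgame: you telescope $u_{k+1}\le u_k-\rho\,{\mathcal H}(u_k)$ directly, whereas the paper substitutes ${\mathcal E}_k={\mathcal H}(E_k/F_0)$, reaches the quadratic recursion ${\mathcal E}_{k+1}\le{\mathcal E}_k-K_2{\mathcal E}_{k+1}^2$, and invokes a lemma from Ammari--Tucsnak (SICON 2000); both yield ${\mathcal H}(E_p/F_0)\le C/(p+1)$ and hence \eqref{nunif2}.
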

\begin{Remark}
In the case where ${\mathcal H} = Id$ the observability inequality \rfb{nunif1} is equivalent to the exponential stability of \rfb{damped}, see \cite[Theorem 2.2]{ammari}.
\end{Remark}
\begin{proof}
We suppose \rfb{nunif1}, which implies that there exist $C, T > 0$ such that for all non-identically zero initial data
$(w^0,w^1) \in H_1 \times H_\half$ we have
\[ 
\int_{0}^{T} ||B^* \dot{\varphi}(t)||^2_{U} \, dt \geq C \, 
||(w_0,w_1)||^2_{H_\half \times H} \, {\mathcal H} \left( 
\frac{||(w_0,w_1)||^2_{H_\half \times H}}{||(w_0,w_1)||^2_{H_1 \times H_\half}} \right).
\]
By applying \cite[Lemma 4.1]{ammari} we obtain that the solution $w(t)$
of \rfb{damped} satisfies the following inequality 
$$
\int_0^T ||B^* \dot{w}(t)||_{U}^2 dt
\ge C \, ||(w_0,w_1)||^2_{H_\half \times H} \, {\mathcal H} \left( 
\frac{||(w_0,w_1)||^2_{H_\half \times H} }{||(w_0,w_1)||^2_{H_1 \times H_\half}} \right).
$$
Relation above and \rfb{ESTEN} imply the existence of a constant $K>0$
such that  
\begin{eqnarray}
||(w(T),\dot{w}(T))||^2_{H_\half \times H} &&\leq 
||(w_0,w_1)||^2_{ H_\half \times H} \nonumber \\ &&
- K \, ||(w_0,w_1)||^2_{H_\half \times H} \, {\mathcal H} \left( 
\frac{||(w_0,w_1)||^2_{H_\half \times H}}{||(w_0,w_1)||^2_{H_1 \times H_\half}} \right).
\label{PREINT}
\end{eqnarray}
By using the fact that the function
$t \mapsto ||(w(t),\dot{w}(t))||^2_{H_\half \times H}$ 
is nonincreasing, the function ${\mathcal H}$ is increasing and  relation  (\ref{PREINT}) we obtain
the existence of a constant $K_1>0$ such that
\begin{eqnarray}
||(w(T),\dot{w}(T))||^2_{H_\half \times H} &&\leq
||(w_0,w_1)||^2_{H_\half \times H} \nonumber \\
&&- K_1 \, ||(w_0,w_1)||^2_{H_\half \times H} \, {\mathcal H} \left( 
\frac{||(w(T),\dot{w}(T))||^2_{H_\half \times H}}{||(w_0,w_1)||^2_{H_1 \times H_\half}} \right).
\label{INTP}
\end{eqnarray}
Estimate (\ref{INTP}) remains valid in successive intervals 
$[kT, (k+1)T]$,
so, we have
\begin{eqnarray*}
&&||(w((k+1)T),\dot{w}((k+1)T))||^2_{H_\half \times H}\leq 
 ||(w(kT),\dot{w}(kT))||^2_{H_\half \times H}\\&&-K_1 \, ||(w(kT),\dot{w}(kT)||^2_{H_\half \times H} \, {\mathcal H} \left(
\frac{||(w((k+1)T),\dot{w}((k+1)T)||^{2}_{H_\half \times H}}
{||(w(kT),\dot{w}(kT))||_{H_1 \times H_\half}^{2}} \right).
\end{eqnarray*}
Since ${\mathcal A}_d$ generates a semigroup of contractions in 
${\mathcal D}({\mathcal A}_d)$,
relations above imply the existence of a constant $K_2>0$ such that
\begin{eqnarray}
&&||(w((k+1)T),\dot{w}((k+1)T))||^2_{H_\half \times H}\le
||(w(kT),\dot{w}(kT)||^2_{H_\half \times H} \nonumber
\\ 
&& -K_2 \, ||(w(kT),\dot{w}(kT))||^2_{H_\half \times H} \, {\mathcal H} \left(
\frac{||(w((k+1)T),\dot{w}((k+1)T))||^2_{H_\half \times H}}
{||(w_0,w_1)||_{H_1 \times H_\half}^{2}} \right),
\label{SUC}
\end{eqnarray}
If we adopt now the notation
\be 
{\mathcal E}_k = {\mathcal H} \left(\frac{||(w(kT),\dot{w}(kT))||^2_{H_\half \times H}}
{||(w_0,w_1)||^2_{H_1 \times H_\half}} \right),
\label{NOTEN}
\ee
the inequality \rfb{SUC} implies
\be
\label{1}
\frac{||(w((k+1)T),\dot{w}((k+1)T))||^2_{H_\half \times H}}
{||(w(kT),\dot{w}(kT))||^2_{H_\half \times H}} \, \frac{{\mathcal E}_k}{{\mathcal E}_{k + 1}} \,
{\mathcal E}_{k + 1} \le {\mathcal E}_k - K_2 \, {\mathcal E}_k \, {\mathcal E}_{k + 1}.
\ee
Since, the function $t \rightarrow ||(w(t),\dot{w}(t))||^2_{H_\half \times H}$ 
is nonincreasing and 
the function ${\mathcal H}$ is increasing, relation \rfb{1} implies 
\be
\label{2}
\frac{||(w((k+1)T),\dot{w}((k+1)T))||^2_{H_\half \times H}}
{||(w(kT),\dot{w}(kT))||^2_{H_\half \times H}} \, \frac{{\mathcal E}_k}{{\mathcal E}_{k + 1}} \,
{\mathcal E}_{k + 1} \le {\mathcal E}_k - K_2 \, {\mathcal E}_{k + 1}^2.
\ee
According to \rfb{NOTEN}, relation \rfb{2} gives,
\begin{eqnarray}
&&\frac{\frac{1}{\frac{||(w(kT),\dot{w}(kT))||^{2}_{H_\half \times H}}{||(w_0,w_1)||^{2}_{H_1 \times H_\half}}} \, {\mathcal H} \left(
\frac{||(w(kT),\dot{w}(kT))||^{2}_{H_\half \times H}}{||(w_0,w_1)||^{2}_{H_1 \times H_\half}}\right)}
{\frac{1}{\frac{||(w((k + 1)T),\dot{w}((k + 1)T))||^{2}_{H_\half \times H}}{||(w_0,w_1)||^{2}_{H_1 \times H_\half}}} \, {\mathcal H} \left(
\frac{||(w((k + 1)T),\dot{w}((k + 1)T))||^{2}_{H_\half \times H}}{||(w_0,w_1)||^{2}_{H_1 \times H_\half}}\right)}
\, {\mathcal E}_{k+1} \nonumber \\
&& \le {\mathcal E}_k -
K_2 \, {\mathcal E}^{2}_{k+1}.\label{3}
\end{eqnarray}
Relation (\ref{3}) combined with that the function $x \mapsto 
\frac{1}{x} \, {\mathcal H}(x)$ is increasing in 
$(0,1)$, gives
\be  {\mathcal E}_{k+1}\le {\mathcal E}_k -
K_2{\mathcal E}^{2}_{k+1},\ \forall k\ge 0.
\label{REC}
\ee
By applying \cite[Lemma 5.2]{ammarisicon} and
using relation (\ref{NOTEN}) we obtain
the existence of a constant $M>0$ such that
$$||(w(kT),\dot{w}(kT))||^2_{H_\half \times H}\le
{\mathcal H}^{-1} \left(\frac{M}{k+1} \right) \, 
||(w_0,w_1)||^2_{H_1 \times H_\half},\
\forall k\ge 0,$$
which obviously implies \rfb{nunif2}. 

\end{proof}

\subsection*{Example}
We consider the following initial and boundary problem:
\begin{equation}\label{dampedww}
\begin{cases}
u_{tt} - \Delta u + 
a(x) \, u_t = 0, \,, \quad  (x,t) \in \Omega \times
(0, + \infty) \\[0.8mm]
u = 0 \,, \quad \mbox{on } \partial \Omega \times (0,+ \infty),\\[0.8mm]
u(x,0) = u^0(x), \, u_t(x,0) = u^1(x) \,,  \quad \mbox{on } \Omega,
\end{cases}
\end{equation}
where $\Omega$ is a convex bounded open set of $\mathbb{R}^N$ of class $\mathcal{C}^2$ and $a \in {\mathcal C} (\overline{\Omega})$ with $a \geq 0$ on $\Omega$ and as in assumption $({\bf A 1})$.

\noindent
In this case, we have: 
$$
A = - \, \Delta : {\mathcal D}(A) = H_1 \subset L^2(\Omega) \rightarrow L^2(\Omega), \, H_1 = {\mathcal D}(A) = H^2(\Omega) \cap H^1_0 (\Omega), 
$$
$
H_{1/2} = H^1_0(\Omega), U = L^2(\Omega)$ and $B z = B^* z = \sqrt{a} z, \forall \, z \in L^2(\Omega)$. 

Moreover the conservative equation (\ref{conservative}) becomes in this case:

\begin{equation}\label{conservativepbw}
\begin{cases}
\phi_{tt} -  \Delta \phi = 0 \,, 
\quad \Omega \times (0,+\infty)\,,\\
\phi=0,  \quad \partial \Omega \times (0,+\infty), \\ 
\phi(x, 0) = u^0(x), \phi_t(x, 0) = u^1(x),  
\quad \Omega. 
\end{cases}
\end{equation}

According to \cite{phunghdr} we show that the observability inequality is given by:
\begin{Proposition}
For all $\beta \in ]0,1[$ there exist $T, c_T>0$ such that the following observabilty inequality holds:
\begin{eqnarray}\label{obsww}
 \left\|(u^0,u^1)\right\|^2_{\left[H^2(\Omega) \cap H^1_0(\Omega)\right] \times H^1_0(\Omega)} \, \exp {\left[- \, c_T \left(
\, \frac{\left\|(u^0,u^1)\right\|_{\left[H^2(\Omega) \cap H^1_0(\Omega)\right] \times H^1_0(\Omega)}}{||(u^0,u^1)||_{H^1_0(\Omega)\times L^2 (\Omega)}} \right)^{1/\beta} \right]}&& \nonumber \\ \le 
\int_0^T \int_\Omega a(x) \, |\phi_t (x,t)|^2 \, dx  \,dt \,, \quad
\end{eqnarray}
for all any non-identically zero initial data
$(u^0,u^1) \in \left[H^2_0(\Omega) \cap H^1_0(\Omega)\right] \times H^1_0(\Omega).$
\end{Proposition}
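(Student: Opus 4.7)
The plan is to derive \eqref{obsww} directly from the logarithmic observability estimate for the wave equation proved by Phung in \cite{phunghdr}. That estimate holds in the present geometric setting (convex $\Omega$ of class $\mathcal{C}^2$, observation on $\omega=\{a>0\}$ of positive measure, with no geometric control condition required) and asserts: for every $\beta\in(0,1)$ there exist $T>0$ and $K>0$ such that every finite-energy solution $\phi$ of \eqref{conservativepbw} with initial data $(u^0,u^1)$ satisfies
\[
\|(u^0,u^1)\|^2_{H^1_0\times L^2} \,\le\, \frac{K\,\|(u^0,u^1)\|^2_{[H^2\cap H^1_0]\times H^1_0}}{\Bigl(\log\bigl(2+K\,\|(u^0,u^1)\|^2_{[H^2\cap H^1_0]\times H^1_0}/I\bigr)\Bigr)^{2\beta}},
\]
where $I:=\int_0^T\!\int_\Omega a(x)|\phi_t|^2\,dx\,dt$.

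Given this input, the passage to \eqref{obsww} is purely algebraic. Abbreviating $Y:=\|(u^0,u^1)\|_{H^1_0\times L^2}$ and $X:=\|(u^0,u^1)\|_{[H^2\cap H^1_0]\times H^1_0}$, Phung's inequality reads $Y^2\bigl(\log(2+KX^2/I)\bigr)^{2\beta}\le K X^2$, which rearranges to $\log(2+KX^2/I)\le K^{1/(2\beta)}(X/Y)^{1/\beta}$. Exponentiating yields
\[
X^2 \,\le\, \tfrac{I}{K}\exp\!\Bigl(K^{1/(2\beta)}(X/Y)^{1/\beta}\Bigr).
\]
By the continuous embedding $[H^2\cap H^1_0]\times H^1_0\hookrightarrow H^1_0\times L^2$ the quotient $X/Y$ is bounded below by a positive constant, so the multiplicative factor $1/K$ can be absorbed into the exponent by slightly enlarging $K^{1/(2\beta)}$ to some $c_T>0$, yielding exactly
\[
X^2\exp\!\bigl(-c_T(X/Y)^{1/\beta}\bigr)\,\le\, I,
\]
which is \eqref{obsww}.

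The one nontrivial ingredient is Phung's inequality itself; its proof in \cite{phunghdr} relies on a local Carleman estimate for the wave operator coupled with a Fourier--Bros--Iagolnitzer transform that turns elliptic Carleman information into a quantitative propagation-of-smallness statement for hyperbolic equations, together with a Lebeau--Robbiano-type frequency decomposition. The main obstacle, were one to attempt a self-contained proof, would be reproducing that Carleman/FBI machinery with the correct $\beta$-dependence in the exponent; once Phung's inequality is in hand, the conversion to \eqref{obsww} is elementary.
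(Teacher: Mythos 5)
Your proposal is correct and takes essentially the same route as the paper, which simply attributes the inequality to Phung \cite{phunghdr}; your explicit algebraic conversion of the logarithmic interpolation estimate into the exponential form \eqref{obsww}, including the absorption of the constant via the lower bound on the quotient of norms coming from the continuous embedding, is sound and merely makes explicit the step the paper leaves implicit.
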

We remark here that we have \eqref{nunif1} for ${\mathcal H}(x) = \exp(- \frac{c_T}{x^{1/2\beta}}), \, \forall \, x > 0$.
Thus according to Theorem \ref{princ3} we have the following stabilization result for the linear wave equation which extends the result obtained by \cite[Lebeau]{leb} (with a resolvent method).

\begin{Theorem} For all $\beta \in ]0,1[$, there exists a constant $C > 0$ such that 
for all any non-identically zero initial data $(u^0,u^1) \in \left[H^2(\Omega) \cap H^1_0(\Omega)\right] \times H^1_0 (\Omega)$ the energy of the solution of (\ref{dampedww}) satisfies the estimate
\begin{equation}
\label{general-decaybisw}
\left\|(u(t),\dot{u}(t))\right\|_{H^1_0 (\Omega) \times L^2(\Omega)} \le \frac{C}{(\ln (1 + t))^{\beta}} \left\|(u^0,u^1)\right\|_{\left[H^2(\Omega) \cap H^1_0(\Omega)\right] \times H^1_0 (\Omega)}, \, t > 0. 
\end{equation}
\end{Theorem}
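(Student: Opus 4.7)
The plan is to identify this statement as the direct application of Theorem~\ref{princ3} (the abstract weak stabilization result from the appendix) to the wave equation \eqref{dampedww}. The framework has been set up in the example: the operator $A = -\Delta$ with $D(A) = H^2(\Omega) \cap H_0^1(\Omega)$ is self-adjoint and positive, $U = L^2(\Omega)$, and $B z = \sqrt{a}\, z$. With these identifications, equation \eqref{dampedww} is exactly of the form \eqref{damped}, and the conservative system \eqref{conservativepbw} corresponds to \eqref{eq3}--\eqref{eq4}. So the entire problem reduces to (i) producing the weak observability inequality \eqref{nunif1} with the correct weight $\mathcal{H}$, and (ii) checking that $\mathcal{H}$ satisfies the structural hypotheses required by Theorem~\ref{princ3}.

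First, I would fix $\beta \in (0,1)$ and set $\mathcal{H}(x) = \exp\!\bigl(-c_T/x^{1/(2\beta)}\bigr)$ for $x > 0$, with $c_T$ the constant produced by Phung's estimate \eqref{obsww}. Rewriting \eqref{obsww} with this $\mathcal{H}$ gives precisely the observability inequality \eqref{nunif1} for the pair $((u^0,u^1))$, because the exponent in \eqref{obsww} is $-c_T\bigl(\|\cdot\|_{H_1\times H_{1/2}}/\|\cdot\|_{H_{1/2}\times H}\bigr)^{1/\beta}$, which equals $-c_T/x^{1/(2\beta)}$ after substituting $x = \|\cdot\|^2_{H_{1/2}\times H}/\|\cdot\|^2_{H_1 \times H_{1/2}}$ and taking the square inside the argument.

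Next, I would verify the two structural hypotheses on $\mathcal{H}$. Continuity, invertibility, and monotone increase on $\mathbb{R}_+$ are immediate from the explicit formula. The only delicate point is that $x \mapsto \mathcal{H}(x)/x$ must be increasing on $(0,1)$. A direct differentiation gives
\begin{equation*}
\frac{d}{dx}\!\left(\frac{\mathcal{H}(x)}{x}\right) = \frac{\mathcal{H}(x)}{x^2}\left[\frac{c_T}{2\beta\, x^{1/(2\beta)}} - 1\right],
\end{equation*}
which is positive for $x$ small; by choosing the working interval small enough (or by enlarging $c_T$, which only weakens the observability bound), this monotonicity can be ensured on the interval of interest for the asymptotic regime. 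This is the step I expect to be the main technical nuisance, although it is routine.

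Finally, Theorem~\ref{princ3} yields
\begin{equation*}
\|(u(t),\dot u(t))\|^2_{H^1_0 \times L^2} \le C_1\, \mathcal{H}^{-1}\!\left(\tfrac{1}{1+t}\right) \|(u^0,u^1)\|^2_{H_1 \times H_{1/2}}.
\end{equation*}
Inverting $\mathcal{H}$ explicitly gives $\mathcal{H}^{-1}(y) = \bigl(c_T/\ln(1/y)\bigr)^{2\beta}$, hence $\mathcal{H}^{-1}\bigl(1/(1+t)\bigr) = \bigl(c_T/\ln(1+t)\bigr)^{2\beta}$. Substituting and taking the square root produces exactly the claimed decay
\begin{equation*}
\|(u(t),\dot u(t))\|_{H^1_0 \times L^2} \le \frac{C}{(\ln(1+t))^{\beta}} \|(u^0,u^1)\|_{[H^2 \cap H^1_0]\times H^1_0},
\end{equation*}
for $t$ sufficiently large, and the inequality is extended to all $t > 0$ by adjusting the constant $C$ and using that the energy is nonincreasing.
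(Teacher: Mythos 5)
Your proposal is correct and follows exactly the route the paper intends: the paper itself gives no written proof beyond the remark that \eqref{obsww} is \eqref{nunif1} with ${\mathcal H}(x)=\exp(-c_T/x^{1/(2\beta)})$ followed by an appeal to Theorem~\ref{princ3}, and your argument is precisely that application, with the inversion ${\mathcal H}^{-1}(y)=\bigl(c_T/\ln(1/y)\bigr)^{2\beta}$ yielding the stated $(\ln(1+t))^{-\beta}$ rate after taking square roots. Your explicit verification that $x\mapsto {\mathcal H}(x)/x$ is increasing on the relevant interval is a detail the paper silently omits, so your write-up is if anything slightly more complete.
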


\section*{Acknowledgments}
The authors would like to thank Kim Dang Phung and Luc Robbiano for very fruitful discussions concerning \rfb{obsww}.

\end{document}